\newtheorem{theoA}{Theorem}
\newcommand{\e}{\rm e}
\newtheorem{thm}{Theorem}[section]
\newtheorem{lem}[thm]{Lemma}
\newtheorem{defi}[thm]{Definition}
\newtheorem{coro}[thm]{Corollary}
\newtheorem{prop}[thm]{Proposition}
\newtheorem{rmk}[thm]{Remark}
\def\E{\mathbb{ E}}
\newcommand{\Gcal}   {{\mathcal G }}
\begin{document}
\date{}
\title[VRJP and fractional moment localization]{A note on recurrence of the Vertex reinforced jump process and fractional moments localization}
\author[A. Collevecchio]{Andrea Collevecchio }
\author[X. Zeng]{Xiaolin Zeng} 

\begin{abstract}
We give a simple proof for recurrence of  vertex reinforced jump process on \(\mathbb{Z}^d\), under strong reinforcement. Moreover, we show how the previous result implies that linearly edge-reinforced random walk on \ \(\mathbb{Z}^d\)  is {recurrent} for strong reinforcement. Finally, we  prove that the \(H^{(2|2)}\) model on \(\mathbb{Z}^d\)  localizes at strong disorder. Even though these results are well-known, we propose a unified approach, {which  also has the advantage to  provide shorter proofs}, and relies on estimating fractional moments, introduced by Aizenman and Molchanov.
\end{abstract}
\maketitle
\section{Introduction}

The Vertex Reinforced Jump Process (VRJP) is a continuous time self-interacting process. It was first studied by Davis and Volkov (\cite{davis2002continuous} and \cite{davis2004vertex}) on \(\mathbb{Z}\). See also \cite{collevecchio2009,articleBS} and \cite{chen2018speed} for a study of VRJP on trees, {and \cite{2018arXiv181006905R} for super-linear VRJP}.  Recent studies {\cite{ST15}} revealed a close relation between VRJP and linearly edge reinforced random walks (ERRW, introduced by Coppersmith and Diaconis \cite{coppersmith1987random}). Moreover, VRJP is also related to a supersymmetric hyperbolic sigma model (introduced by Zirnbauer~\cite{Zirnbauer91}), called the \(H^{(2|2)}\)-model, studied in~\cite{DS10,DSZ06}. The latter is a toy model for the study of Anderson transition. The  paper \cite{STZ15} introduced a random operator which is naturally related to these objects.

In \cite{ST15} and \cite{angel2014localization}, were given two different proofs of the fact that the ERRW are recurrent under strong reinforcement on \(\mathbb{Z}^d\). These were long-standing open problems in the field. We will give yet an alternative {short} proof, using a unifying approach built on ideas  from \cite{aizenman1993localization}. Moreover, we use the fact that ERRW is a time change of VRJP with random  i.i.d. conductance to prove recurrence of ERRW on \(\mathbb{Z}^d\) when the reinforcement is strong enough.

\section{The model} We define VRJP as follows. {Let} \(\mathcal{G}=(V,E,W, \theta)\) be a non-directed locally finite weighted graph, where to each edge \(e \in E\)  is assigned an initial weight \(W_e \ge0\) and to each vertex \(i\) is assigned an initial weight \(\theta_i>0\).  Moreover, we assume that \(\mathcal{G}\) has self-loops, i.e. edges connecting a vertex to itself; each pair of vertices in \(V\) can be joined by at most one edge, and two vertices \(i,j\) are neighbors, denoted by \(i\sim j\), if they are joined by an edge.  {In this paper,} we mainly focus on \(\mathbb{Z}^d\) and its sub-graphs, with general weights, possibly random.  Denote by \(W = (W_e)_{e \in E}\) and \(\theta = (\theta_i)_{i \in V}\).
 For \(i\sim j\) we use the notation \(W_{i,j}\) for the weight on the edge connecting \(i\) and \(j\). If \(i\) and \(j\) are not neighbors, we set \(W_{i,j} = 0\).
VRJP\((W, \theta)\) on \(\Gcal\) is a  continuous time process that takes values on  \(V\). This process is denoted by  \({\bf Y}= (Y_t)_{t\ge 0}\) and  starts at \(Y_0=i_0\in V\), where \(i_0\in V\) is a designated vertex. Conditionally on the past of \({\bf Y}\) up to time \(t\), and conditionally to  \(Y_t=i\), this process jumps at time \(t\) towards \(j\)  at rate
\begin{equation}
\nonumber
W_{i,j}L_j(t)\ \ \ \text{    where   }L_j(t)=\theta_j+\int_0^t \mathds{1}_{Y_u=j}du.
\end{equation}
{In particular, VRJP can only jump among adjacent vertices.}
It is shown in \cite{ST15} that after a  suitable time change,  the VRJP is a mixture of Markov jump processes. Section~\ref{sec-multi-IG-dist} contains a descriptions of the mixing measure and its very useful properties.

Next, we define the Linearly Edge-Reinforced Random Walk (ERRW). Fix a collection \(a=(a_e)_{e \in E}\) of positive real numbers, they are called the initial weights of ERRW. It is a discrete time process, which takes values on \(V\), and at each step jumps between nearest neighbors, updating the weights on the edges as follows. Initially to each edge \(e\) is assigned a weight \(a_e\). Each time the process traverses an edge, the weight of that edge is increased by 1. The probability to traverse a given edge at   a given time is proportional to the weight of that edge at that time. We denote ERRW\((a)\) for such a process.

We use the notation VRJP(\(W\)) to denote VRJP with edge weights \(W\) and vertex weights \(\theta_i \equiv 1\), that is VRJP(\(W,1\)). It turns out that, as a consequence of Theorem~\ref{thmA-kolmogorov-extension}, or Corollary 1 of \cite{STZ15}, the two models VRJP(\(W,\theta\)) and VRJP(\(W^{\theta},1\)) (where \(W^{\theta}_{i,j}=W_{i,j}\theta_i \theta_j\)) behave the same in term of recurrence/transience. Moreover, most of our results on infinite graphs assume some ergodicity of the model w.r.t. \(\mathbb{Z}^d\)-translation, that is, for simplicity, we will always consider constant \(W_{i,j}\equiv W\) and \(\theta_i\equiv \theta\) on \(\mathbb{Z}^d\), in such case we also have {equivalence among} the models VRJP(\(W,\theta\)), VRJP(\(W^{\theta},1\)) and VRJP(\(1,\sqrt{W}\theta\)). In particular, considering VRJP(\(W\)) is almost as general as considering VRJP(\(W,\theta\)).
\begin{thm}\label{LERRW}
\begin{enumerate}
\item Consider a collection of independent positive random variables  \(\widetilde{W} = \{\widetilde{W}_e \colon e \in E\}\). Consider the process  \({\bf Y}\) defined as follows. Conditionally on \(\widetilde{W}\), \({\bf Y}\) is VRJP\((\widetilde{W})\). There exists \(\overline{W}_d \in (0, \infty]\)  such that if
    \[ \sup_{e \in E} \E\left[\widetilde{W}_{e}^{1/4}\right] {<} \overline{W}_d,\]
    then \({\bf Y}\) is recurrent\footnote{By recurrent we mean that the process visit its starting position infinitely often almost surely.}.\\
\item Consider ERRW\((a)\) on \(\mathbb{Z}^d\), with \(d \ge 1\).
There exists \(\overline{a}(d) \in (0, \infty)\) such that ERRW\((a)\) satisfying \(\sup_{e \in \E} a_e \le \overline{a}(d)\) is recurrent.
\end{enumerate}
\end{thm}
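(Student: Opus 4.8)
The plan is to reduce everything to a fractional‑moment estimate on a finite box, following the Aizenman–Molchanov philosophy. The key object, supplied by the representation of VRJP as a mixture of Markov jump processes (Section~\ref{sec-multi-IG-dist} and Theorem~\ref{thmA-kolmogorov-extension}), is the random environment: on a finite subgraph the VRJP in its natural time scale is a reversible Markov jump process with random conductances, and recurrence on $\mathbb{Z}^d$ should follow from showing that the Green's function (equivalently, the effective conductance from the origin to the boundary of a box $\Lambda_n$) does not grow, in a suitable fractional‑moment sense, as $n\to\infty$. Concretely, I would fix $s\in(0,1)$ — the statement's exponent $1/4$ strongly suggests $s=1/4$ is the value that makes the computation close — and estimate $\E\big[ G_{\Lambda_n}(i_0,i_0)^{s}\big]$ or the fractional moment of the relevant "transmission" quantity between $i_0$ and $\partial\Lambda_n$, and show it is bounded uniformly in $n$; by a Borel–Cantelli / resolvent argument this yields a.s.\ recurrence of ${\bf Y}$.

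The mechanism for the bound is the standard one: derive a recursive (sub‑multiplicative) inequality along nearest‑neighbour paths. Using the explicit form of the mixing measure (the $\beta$‑field / inverse‑Gaussian type marginals, with its crucial independence and conditional‑independence structure), one writes the box quantity at scale $n$ in terms of the quantity at scale $n-1$ times a single‑edge factor, takes the $s$‑th power (here $s<1$ makes the triangle inequality $|x+y|^s\le |x|^s+|y|^s$ available, which is the whole point of working with fractional moments), and then takes expectations, exploiting independence of the $\widetilde W_e$ to factorize. The per‑step gain is controlled by a quantity like $C_d\,\sup_e\E[\widetilde W_e^{s}]$, where $C_d$ counts the local combinatorial branching (number of neighbours, self‑loops); provided $\sup_e \E[\widetilde W_e^{1/4}] < \overline W_d := 1/C_d$ (or a constant of that flavour), the recursion contracts and the fractional moments stay bounded. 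This proves part~(1).

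For part~(2), I would invoke the fact recalled in the excerpt that ERRW$(a)$ on $\mathbb{Z}^d$ is a time change of a VRJP with i.i.d.\ random conductances $\widetilde W_e$ whose law is explicit (Beta / Gamma‑ratio type, governed by the initial weights $a_e$); since a time change does not affect recurrence, it suffices to check that, for $a_e\le \overline a(d)$ small enough, the induced conductances satisfy $\sup_e\E[\widetilde W_e^{1/4}]<\overline W_d$. This is a one‑parameter analytic check: as the $a_e$ decrease, the reinforcement becomes stronger, the conductances concentrate near $0$, and $\E[\widetilde W_e^{1/4}]\to 0$; continuity/monotonicity in $a_e$ then gives the threshold $\overline a(d)$, and part~(1) applies to conclude recurrence of ERRW$(a)$.

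The main obstacle I anticipate is establishing the sub‑multiplicative recursion cleanly in the VRJP environment: unlike the Anderson model, here the "randomness" sits in the conductances of a reversible random walk rather than in a diagonal potential, so one must identify the right finite‑volume quantity (likely an effective‑resistance or escape‑probability functional, or a ratio of partition functions of the $\beta$‑field) for which (i) recurrence is equivalent to boundedness of its fractional moments and (ii) a genuine one‑step inequality with an explicitly computable constant $C_d$ holds after integrating out one edge weight. Getting the constant sharp enough that the hypothesis reads exactly $\sup_e\E[\widetilde W_e^{1/4}]<\overline W_d$ — in particular pinning down why the exponent is $1/4$ rather than a generic $s$ — is where the real work lies; everything else (Borel–Cantelli, the time‑change transfer to ERRW, the small‑$a$ asymptotics) is routine.
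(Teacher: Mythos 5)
You correctly identify the Aizenman--Molchanov framework, the role of an exponent $s\in(0,1)$ for the fractional-moment subadditivity, and the reduction of part~(2) to part~(1) via the Sabot--Tarr\`es time change (ERRW $=$ VRJP with independent Gamma$(a_e,1)$ conductances). But you explicitly flag the central step --- the one-step sub-multiplicative inequality --- as an unresolved obstacle, and that step is precisely where the paper's argument lives. It rests on three concrete ideas that your outline does not supply. First, the Green function of $H_\beta=2[\beta]-\Delta_W$ is expanded as a Neumann (random-walk) series, $G(x,y)=\sum_{\sigma:x\to y}W_\sigma/(2\beta)_\sigma$ over all nearest-neighbour paths (Proposition~\ref{orgeb8276c}); here the randomness \emph{does} sit in a diagonal potential $2\beta$, so the contrast you draw with the Anderson model is not an obstruction --- the VRJP/$\beta$-field representation has already put the problem into Schr\"odinger form. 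Second, each path from $0$ to $x$ is cut into its loop-erased (self-avoiding) skeleton $\tau$ plus a dangling loop sum at each skeleton vertex, and the restricted loop sum at $\tau_i$ is dominated by the diagonal Green function $G(\tau_i,\tau_i)$. Third --- the essential probabilistic input --- Proposition~\ref{coro-single-site-decomp} shows that $1/(2G(i_0,i_0))$ is Gamma$(\tfrac12,\theta_{i_0}^{-2})$-distributed and independent of $(\beta_j)_{j\ne i_0}$, so $\E\bigl[G(i_0,i_0)^{1/4}\bigr]=:C$ is a finite constant independent of the edge weights (Lemma~\ref{lem-moment-of-greenfct}).

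With these three ingredients the recursion closes: peeling off one skeleton vertex and one skeleton edge at a time and using $(\sum a_i)^{1/4}\le\sum a_i^{1/4}$ gives $\E[G(0,x)^{1/4}]\le\sum_{k\ge|x|}(2d)^k\bigl(C\sup_e\E[\widetilde W_e^{1/4}]\bigr)^k$, which decays exponentially once $2d\,C\sup_e\E[\widetilde W_e^{1/4}]<1$; this is exactly what pins down $\overline W_d$. Note also that the exponent $1/4$ is not canonical --- any $s\in(0,\tfrac12)$ works (one needs $s<1$ for sub-additivity and $s<\tfrac12$ so the reciprocal of a Gamma$(\tfrac12,\cdot)$ has a finite $s$-moment); $1/4$ is a convenient choice. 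Without the Neumann expansion, the loop-erased path cut, and the Gamma/independence lemma, the box-to-box recursion you sketch cannot be closed, so the ``main obstacle'' you name is a genuine gap in the proposal rather than a routine technicality.
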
 
\begin{rmk} {In our proof, we provide bounds for $\overline{W}_d$. More precisely, }
\begin{equation}\label{eq:lowb}
{\overline{W}_d  \ge \frac{\sqrt{\pi}}{\Gamma(1/4) 2^{5/3}d}\approx \frac{0.24}{d}}.
\end{equation}
In terms of ERRW(\(a\)), \(\overline{W}_d\ge \frac{\Gamma(1/4+a)}{\Gamma(a)}\), which implies, for example, \(\overline{a}(3)\lesssim 0.65\).
\end{rmk}
\begin{coro}
\label{thm-rec-in-str-reinfor}
Consider VRJP\((W)\) on   \(\mathbb{Z}^d\), where  \(W = (W_e)_{e \in E}\) is  a collection of deterministic weights and \(E\) is the edge set   of \(\mathbb{Z}^d\).  Let
  \(\overline{W}_d\)   be as in Theorem~\ref{LERRW}. If \(\sup_{e \in E} {W_e}<\overline{W}_d^4\), then the VRJP\((W)\) is a.s. recurrent.
\end{coro}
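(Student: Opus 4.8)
The plan is to deduce this corollary directly from part (1) of Theorem~\ref{LERRW} by specializing to deterministic weights. Concretely, given a collection $W=(W_e)_{e\in E}$ of deterministic weights on $\mathbb{Z}^d$, I set $\widetilde{W}_e := W_e$ for every $e\in E$. Constant random variables are (degenerately) independent, so the collection $\{\widetilde{W}_e \colon e\in E\}$ satisfies the hypotheses of Theorem~\ref{LERRW}(1), and the process ${\bf Y}$ obtained by running VRJP$(\widetilde{W})$ conditionally on $\widetilde{W}$ is simply VRJP$(W)$.

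It then remains to check the fractional moment condition. Since $\widetilde{W}_e$ is a.s. equal to the constant $W_e$, we have $\E[\widetilde{W}_e^{1/4}] = W_e^{1/4}$, hence
\[
\sup_{e\in E}\E\left[\widetilde{W}_e^{1/4}\right] \;=\; \sup_{e\in E} W_e^{1/4} \;=\; \left(\sup_{e\in E} W_e\right)^{1/4}.
\]
The assumption $\sup_{e\in E} W_e < \overline{W}_d^{\,4}$ is therefore equivalent to $\sup_{e\in E}\E[\widetilde{W}_e^{1/4}] < \overline{W}_d$, which is exactly the hypothesis of Theorem~\ref{LERRW}(1). Applying that theorem gives that ${\bf Y} = \text{VRJP}(W)$ is a.s. recurrent, which is the claim.

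I expect no genuine obstacle here: the only points to verify are that deterministic weights are admissible in the conditional construction of Theorem~\ref{LERRW}(1) (they are, being degenerate independent variables) and that the exponent bookkeeping matches, i.e.\ that raising to the fourth power converts the threshold $\overline{W}_d$ on $\E[\widetilde{W}_e^{1/4}]$ into the threshold $\overline{W}_d^{\,4}$ on $\sup_e W_e$. As an alternative one could bypass Theorem~\ref{LERRW} entirely and rerun the fractional-moment argument of Aizenman--Molchanov underlying Theorem~\ref{LERRW}(1) directly with $\E[\widetilde{W}_e^{1/4}]$ replaced everywhere by $W_e^{1/4}$; since that argument only ever uses the $1/4$-moments of the weights, nothing changes. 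But the one-line reduction above is cleaner, and is the route I would take.
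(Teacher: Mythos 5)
Your proof is correct and matches the paper's intended route: the paper simply remarks that Theorem~\ref{LERRW} part (1) ``clearly implies'' Corollary~\ref{thm-rec-in-str-reinfor}, and your specialization to degenerate (deterministic) weights, together with the observation that $\E[\widetilde{W}_e^{1/4}] = W_e^{1/4}$ turns the threshold $\overline{W}_d$ on the fractional moment into the threshold $\overline{W}_d^{4}$ on $\sup_e W_e$, is exactly the bookkeeping that justifies that remark.
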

Moreover, we were able to apply our proof to show localization of a random Schrödinger operator \(H(\theta)\) (c.f. definition in Theorem~\ref{thm-induction-multiIG-IG}) connected both to the   \(H^{(2|2)}\) model (introduced in \cite{Zirnbauer91}) and  VRJP\((1, \theta)\). We recall that a  random operator is called localized if it has a.s. a complete set of orthonormal eigenfunctions, which decay exponentially (in particular, it has only pure point spectrum). {It is shown in Theorem 1 of \cite{DS10} that, the Green function of \(H(\theta)\) at energy level 0 (ground state) decay exponentially when \(\theta\) is small enough}. Our approach provides an alternative proof of the above in Section~\ref{sec-pp-spec}, see Theorem~\ref{loc}.

The first part of  Theorem~\ref{LERRW}  clearly implies  Corollary~\ref{thm-rec-in-str-reinfor}.  Moreover, Part 2) of Theorem~\ref{LERRW} is a corollary of part 1). In fact, it relies on  a result of  Sabot and Tarr\`es which can be described as follows. If \(\bf Y\) is the process defined in Theorem~\ref{LERRW} part 1), where \(\widetilde{W} =(\widetilde{W}_e)_{e \in E} \) are independent random variables and Gamma(\(a_e\),1) distributed; then the skeleton of \(\bf Y\) (that is, the discrete time process associated to \(\bf Y\)) equals in distribution to ERRW\((a)\) (proof can be found in Section~\ref{sec-vrjp-rwre}). {Hence, we only need to prove Theorem~\ref{LERRW} part 1)}.

The rest of the paper is organized as follows: In Section~\ref{sec-multi-IG-dist} we will introduce a family of mixing measures connected with the local times of VRJP and derive some of its properties for later use. In Section~\ref{sec-vrjp-rwre}, as a preparation, we recall the fact that VRJP is a mixture of Markov jump processes, and to be self-contained, we provide proofs. Section~\ref{sec-rec} is devoted to the proof of recurrence in strong reinforcement, i.e. Theorem~\ref{LERRW} part 1). Finally, in Section~\ref{sec-pp-spec}, we show that, the operator related to the VRJP is localized in strong disorder, as an application of \cite{aizenman1993localization}.
\section{The multivariate inverse Gaussian distribution}
\label{sec-multi-IG-dist}
The aim of this section is to introduce and study the properties of a particular random potential of some Schrödinger operators on finite weighted graphs. This operator is then extended to infinite graphs, in particular \(\mathbb{Z}^d\).

Consider a finite weighted graph \(\mathcal{G}=(V,E, W, \theta)\). For notational reason we identify \(V\) with the set  \(\{1 ,\ldots,N\}\). Recall that to each unordered pair of vertices   \(\{i,j\}\) we assign  a  non-negative weight \(W_{i,j}\), which is strictly positive if and only if \(i\sim j\), and denote \(W = (W_{e})_{e \in E}\) for short. Moreover, to each vertex \(i\in V\) is assigned a real number \(\theta_i>0\), and \(\theta = (\theta_i)_{i \in V}\).

\begin{defi}\label{RSO}  A Schrödinger operator  \(H_\beta\)  on the finite weighted graph   \(\mathcal{G}\) with potential \(2\beta \in \mathbb{R}^N\)  is an \(N\times N\) matrix  with the following entries (\(1\le i,j\le N\))
\begin{equation}
\label{eq-def-H}
H_\beta(i,j)=\begin{cases}  2\beta_i & i=j\\ -W_{i,j} & i\ne j\end{cases}.
\end{equation}
\end{defi}
For any  \(\beta\in \mathbb{R}^N\), let \([\beta]\) be the diagonal matrix where the \(i\)-entry of the diagonal is \(\beta_i\), for \(1 \le i \le N\).
If we denote by  \(\Delta_W\) the weighted graph Laplacian,  whose entries are  \((W_{i,j})_{i,j \in V}\) (recall that \(W_{i, i} =0\)), we have \(H_\beta= 2 [\beta]-\Delta_W\). We call \(2\beta\) the potential, and even though this  choice might differ with part of the literature, we {aim} to be consistent with  the terminology used in few papers that studied VRJP, e.g. in \cite{STZ15,SZ15}.  A main ingredient in the present paper is a random version of \(H_\beta\), with a particular random potential \(\beta\) defined by the following theorem.

The proof of the following theorem is due to Letac  and Jacek \cite{letac2017multivariate} and  can also be found in \cite{SZ15}. For the sake of completeness, we included a proof in the Appendix.
\begin{theoA}
\label{thm-induction-multiIG-IG}Let \(H=H_\beta\) be defined as in \eqref{eq-def-H}. If \(\theta=(\theta_1 ,\ldots,\theta_N)\) and \(\eta=(\eta_1 ,\ldots,\eta_N)\) are  vectors with  real positive coordinates, then
\begin{equation}
\label{eq-multi-IG}
\int_{H>0} \frac{\prod_i \theta_i}{\sqrt{(\pi/2)^N}}  e^{-\frac{1}{2}\left( \left< \theta,H \theta \right>+\left< \eta,H^{-1} \eta \right>-2\left< \theta,\eta \right> \right)}\frac{1}{\sqrt{\det H}}\prod_{i\in V}d\beta_i=1,
\end{equation}
where \(\left< \cdot,\cdot \right>\) is the usual scalar product of \(\mathbb{R}^N\), \(\left< \theta,H \theta \right>=\sum_{1\le i,j\le N}H(i,j)\theta_i \theta_j\), and \(\{H>0\}\) is the collection of \(\beta\) such that \(H_{\beta}\) is positive definite. In particular, the integrand is a probability density and  defines the distribution of an \(N\)-dimensional random vector \(\beta\).
\end{theoA}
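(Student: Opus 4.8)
The plan is to prove \eqref{eq-multi-IG} by induction on $N=|V|$, at each step integrating out the variable $\beta_N$ attached to a distinguished vertex and reducing to the Schrödinger operator of the induced subgraph on the remaining $N-1$ vertices. For the base case $N=1$ we have $H=2\beta_1$ and $\{H>0\}=\{\beta_1>0\}$; after the substitution $x=2\beta_1$ the left-hand side of \eqref{eq-multi-IG} becomes $\tfrac{\theta_1}{2\sqrt{\pi/2}}\,e^{\theta_1\eta_1}\int_0^\infty x^{-1/2}e^{-\frac12(\theta_1^2 x+\eta_1^2/x)}\,dx$, which I would evaluate to $1$ using the elementary identity $\int_0^\infty x^{-1/2}e^{-ax-b/x}\,dx=\sqrt{\pi/a}\,e^{-2\sqrt{ab}}$, valid for $a,b>0$.

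\emph{Inductive step.} Assume \eqref{eq-multi-IG} for graphs with $N-1$ vertices. I would single out vertex $N$ and write, in block form for the splitting $\{1,\dots,N-1\}\sqcup\{N\}$,
\[
H=\begin{pmatrix}\check H & -w\\ -w^{T} & 2\beta_N\end{pmatrix},
\]
noting that, because the diagonal of $\Delta_W$ vanishes, $\check H=H_{\check\beta}$ is \emph{exactly} the operator of Definition~\ref{RSO} for the induced subgraph on $\{1,\dots,N-1\}$ with potential $2\check\beta=(2\beta_1,\dots,2\beta_{N-1})$, and $w=(W_{1,N},\dots,W_{N-1,N})^{T}\ge 0$ is the vector of weights joining $N$ to the rest. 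Then I would invoke the Schur-complement facts: $H$ is positive definite iff $\check H$ is and $s:=2\beta_N-w^{T}\check H^{-1}w>0$; $\det H=s\det\check H$; and, by block inversion, $\langle\eta,H^{-1}\eta\rangle=\langle\check\eta,\check H^{-1}\check\eta\rangle+\tfrac1s\big(w^{T}\check H^{-1}\check\eta+\eta_N\big)^{2}$. Expanding also $\langle\theta,H\theta\rangle=\langle\check\theta,\check H\check\theta\rangle-2\theta_N w^{T}\check\theta+2\beta_N\theta_N^{2}$ and $\langle\theta,\eta\rangle=\langle\check\theta,\check\eta\rangle+\theta_N\eta_N$ (with $\check\theta=(\theta_1,\dots,\theta_{N-1})$, $\check\eta=(\eta_1,\dots,\eta_{N-1})$), and using $2\beta_N=s+w^{T}\check H^{-1}w$, the exponent of \eqref{eq-multi-IG} becomes $-\tfrac12$ times a $\check\beta$-only quadratic form, plus $-\tfrac12(\theta_N^{2}s+A^{2}/s)$ with $A:=w^{T}\check H^{-1}\check\eta+\eta_N$, plus $-\tfrac12$ times a $\beta_N$-free remainder $c:=\theta_N^{2}w^{T}\check H^{-1}w-2\theta_N w^{T}\check\theta-2\theta_N\eta_N$.

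\emph{Integrating out $\beta_N$ and concluding.} Since the integrand is nonnegative and $\{H>0\}$ fibers over $\{\check H>0\}$ with fiber $\{\beta_N:s>0\}$, Tonelli lets me do the $\beta_N$-integral first; changing variable to $s$ ($d\beta_N=\tfrac12 ds$), the inner integral $\int_0^\infty s^{-1/2}e^{-\frac12(\theta_N^{2}s+A^{2}/s)}\,ds=\sqrt{2\pi}\,\theta_N^{-1}e^{-\theta_N|A|}$ is again of inverse-Gaussian type, and the factor $\theta_N/\sqrt{\pi/2}$ together with the Jacobian cancels it down to $e^{-\theta_N|A|}$, leaving the $(N-1)$-vertex prefactor $\prod_{i<N}\theta_i\big/\big(\sqrt{(\pi/2)^{N-1}}\sqrt{\det\check H}\big)$. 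At this point I use the sign fact $A>0$: $\check H$ is positive definite with nonpositive off-diagonal entries, hence a Stieltjes matrix, so $\check H^{-1}\ge 0$ entrywise and $A\ge\eta_N>0$; thus $|A|=A$, and $c+2\theta_N|A|$ equals, by the elementary identity
\[
\langle\check\eta',\check H^{-1}\check\eta'\rangle-\langle\check\eta,\check H^{-1}\check\eta\rangle-2\langle\check\theta,\check\eta'-\check\eta\rangle=\theta_N^{2}\,w^{T}\check H^{-1}w+2\theta_N\,w^{T}\check H^{-1}\check\eta-2\theta_N\,w^{T}\check\theta
\]
with $\check\eta':=\check\eta+\theta_N w$, precisely the correction needed to turn the $\check\beta$-only quadratic form into $\langle\check\theta,\check H\check\theta\rangle+\langle\check\eta',\check H^{-1}\check\eta'\rangle-2\langle\check\theta,\check\eta'\rangle$. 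Hence after integrating out $\beta_N$ the integrand becomes exactly the integrand of \eqref{eq-multi-IG} for the induced subgraph with vectors $\check\theta$ and $\check\eta'$ — and $\check\eta'$ has strictly positive coordinates — so the remaining integral over $\{\check H>0\}$ is $1$ by the induction hypothesis. The final assertion (that the integrand is a probability density) is then immediate, the integrand being nonnegative on $\{H>0\}$ with total integral $1$.

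\emph{Main difficulty.} The routine parts are the block algebra and the one-dimensional integrals; the delicate point is the \emph{exact} cancellation in the inductive step — that after the $s$-integration both the analytic prefactor and the quadratic form collapse to the $(N-1)$-vertex expressions, the ``miracle'' being the displayed identity that generates the shifted parameter $\check\eta'=\check\eta+\theta_N w$. A point easy to miss is that the one-dimensional integral delivers $e^{-\theta_N|A|}$, not $e^{-\theta_N A}$; it is exactly the Stieltjes (nonpositive off-diagonal) structure of $H=2[\beta]-\Delta_W$ that forces $A>0$ and reconciles the two, and this is essentially the only place the special form of $H$ enters beyond the trivial block decomposition.
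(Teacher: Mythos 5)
Your proof is correct and follows essentially the same route as the paper: a Schur block decomposition of $H$ combined with the one-dimensional inverse-Gaussian integral \eqref{eq-i-IG-dist}, iterated one vertex at a time, and your shifted parameter $\check\eta'=\check\eta+\theta_N w$ is exactly the paper's $\hat\eta_{V_1}=\eta_{V_1}+W_{V_1,V\setminus V_1}\theta_{V\setminus V_1}$ from Proposition~\ref{coro-single-site-decomp}. The only noteworthy difference is one of care rather than method: you make explicit the Stieltjes/M-matrix observation that $\check H^{-1}\ge 0$ entrywise on $\{\check H>0\}$, hence $A>0$, so that the one-dimensional integral yields $e^{-\theta_N A}$ rather than merely $e^{-\theta_N|A|}$ -- a sign fact the paper's appendix uses implicitly -- and you orient the induction by integrating out the single Schur-complement variable first (taking $V_2$ to be a singleton), which makes the reduction to the $(N-1)$-vertex marginal \eqref{eq-marginal-of-beta} immediate, whereas the paper's write-up nominally sets $V_1=\{1\}$ and iterates.
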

The density appearing in the integrand in \eqref{eq-multi-IG} is a multidimensional version  of the  inverse Gaussian distribution. More precisely,  \eqref{eq-multi-IG} is a generalization of  the well-known fact, that for any \(a,b>0\)
\begin{equation}
\label{eq-i-IG-dist}
\frac{a}{\sqrt{\pi/2}}\int_0^\infty e^{-\frac{1}{2}(a^2x+b^2/x-2ab)}\frac{1}{\sqrt{x}}dx=1.
\end{equation}

\begin{rmk}
The density appearing in Theorem~\ref{thm-induction-multiIG-IG} has a rather long history and also have several names. First of all, it is a generalization of  both the Gamma distribution and the Inverse Gaussian {(IG)} distribution. It is related to the magic formula proposed in \cite{coppersmith1987random}, which is then discussed in \cite{diaconis2006bayesian,keane2000edge,merkl2008magic}. In the meantime, it also appears as a hyperbolic supersymmetric measure in \cite{Zirnbauer91,DSZ06,DS10}, which is then identified to the magic formula by Sabot and Tarrès in \cite{ST15}. It is introduced in the above form (with \(\eta=0\)) in \cite{STZ15} and with \(\eta\ne 0\) in \cite{SZ15,DMR15,letac2017multivariate}. 
\end{rmk}

\begin{defi}
\label{defi-nu-G-theta-eta}
We say {that} \(\beta\) is \(\nu^{W,\theta,\eta}\) distributed if its density is defined by \eqref{eq-multi-IG}. Let \(\beta\) be \(\nu^{W,\theta,\eta}\) distributed, and \(H_\beta\) defined as in \eqref{eq-def-H}. Then \(H_\beta\) is a random Schrödinger operator (as its potential is random) associated to the weighted graph \(\mathcal{G}\). Sometimes, we  denote this operator by \(H_{W,\theta,\eta}\) to emphasis its dependency on the parameters.
\end{defi}
For a subset \(V_1\) of \(V\), let us denote \(H_{V_1,V_1}\) the sub-matrix \((H(i,j))_{i,j\in V_1}\) and \(\theta_{V_1}\) is the sub-vector \((\theta_i)_{i\in V_1}\), same for the sub-vector \(\eta_{V_1}\). From the proof of Theorem~\ref{thm-induction-multiIG-IG}, we deduce the following.
\begin{prop}
\label{coro-single-site-decomp}
Let \(\beta\) be a random variable with distribution \(\nu^{W,\theta,\eta}\).  Let \(V_1=V\setminus\{i_0\}\) and \(V_2=\{i_0\}\). The density of the marginal \((\beta_i)_{i\in V_1}\) is
\begin{equation}
\label{eq-density-betaV1}
\nonumber
\left( \frac{2}{\pi} \right)^{|V_1|/2}e^{-\frac{1}{2}\left( \left< \theta_{V_1}, H_{V_1,V_1}\theta_{V_1} \right>+\left< \hat{\eta}_{V_1},\hat{G}^{V_1} \hat{\eta}_{V_1} \right>-2\left< \theta_{V_1},\hat{\eta}_{V_1} \right> \right)}\frac{\mathds{1}_{H_{V_1,V_1}>0}}{\sqrt{\det H_{V_1,V_1}}}\prod_{i\in V_1}\theta_i d\beta_i,
\end{equation}
where \(\hat{\eta}_{V_1}=\eta_{V_1}+W_{V_1,V \setminus{V_1}}\theta_{V\setminus V_1}\) and \(\hat{G}^{V_1}=(H_{V_1,V_1})^{-1}\);
The conditional density of \(\beta_{i_0}\) given \((\beta_i)_{i\in V_1}\) can be expressed as
\begin{equation}
\label{eq-cond-den-betai0}
\mathds{1}_{\gamma>0}\theta_{i_0}\frac{1}{\sqrt{\pi}} e^{- \theta_{i_0}^2 \gamma-\frac{1}{4 \gamma}\check{\eta}_{i_0}^2+\theta_{i_0}\check{\eta}_{i_0}}\frac{1}{\sqrt{\gamma}}d \gamma,
\end{equation}
where the above density is written with the change of variables
\begin{equation}
\label{eq-gamma-is-2betaminus-etc}
\gamma=\beta_{i_0}-\frac{1}{2}\left< W_{i_0,\cdot},\hat{G}^{V_1}W_{i_0,\cdot} \right>=  \beta_{i_0}-\frac{1}{2}\sum_{j:j\sim i_0} W_{i,j}\frac{G(i_0,j)}{G(i_0,i_0)}=\frac{1}{2G(i_0,i_0)},
\end{equation}
and with the notation
\begin{equation}
\label{eq-checketa-i0}
\check{\eta}_{i_0}=\sum_{j\sim i_0}\sum_{k\in V_1}W_{i_0,j}\hat{G}^{V_1}(j,k)\eta_k+\eta_{i_0},
\end{equation}
in such a way that \(\gamma\) is independent of \((\beta_i)_{i\in V_1}\). In particular, if \(\eta\equiv 0\) then \(\gamma\) is Gamma \((\frac{1}{2},\frac{1}{\theta_{i_0}^2})\) distributed.
\end{prop}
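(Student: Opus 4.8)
The plan is to obtain both densities and the independence at once, by computing the joint law of $\big((\beta_i)_{i\in V_1},\gamma\big)$ through a brute block-matrix manipulation of the integrand in \eqref{eq-multi-IG}; this is essentially the single-site step that drives the inductive proof of Theorem~\ref{thm-induction-multiIG-IG}. First I would write $H=H_\beta$ in block form with respect to the partition $V=V_1\sqcup\{i_0\}$,
\[
H=\begin{pmatrix} H_{V_1,V_1} & -W_{i_0,\cdot}\\ -W_{i_0,\cdot}^{\top} & 2\beta_{i_0}\end{pmatrix},
\]
where $W_{i_0,\cdot}=(W_{i_0,j})_{j\in V_1}$ and $H_{V_1,V_1}$ is the Schrödinger operator of the induced subgraph $\mathcal{G}|_{V_1}$ with potential $(2\beta_i)_{i\in V_1}$. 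Then, whenever $H_{V_1,V_1}$ is invertible (which is the case a.e. on $\{H>0\}$), I record the three standard consequences of the Schur-complement identity: with $\hat G^{V_1}=(H_{V_1,V_1})^{-1}$ and $\gamma$ as in \eqref{eq-gamma-is-2betaminus-etc}, the Schur complement of $H_{V_1,V_1}$ in $H$ equals $2\beta_{i_0}-\langle W_{i_0,\cdot},\hat G^{V_1}W_{i_0,\cdot}\rangle=2\gamma$; hence $\det H=2\gamma\,\det H_{V_1,V_1}$, and $H$ is positive definite iff $H_{V_1,V_1}$ is and $\gamma>0$. The block-inversion formula also gives $G(i_0,i_0)=(H^{-1})_{i_0,i_0}=1/(2\gamma)$ and $G(i_0,k)=\langle W_{i_0,\cdot},\hat G^{V_1}(\cdot,k)\rangle/(2\gamma)$ for $k\in V_1$, which yields the remaining equalities in \eqref{eq-gamma-is-2betaminus-etc}.

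Next I would perform the change of variables $(\beta_i)_{i\in V}\mapsto\big((\beta_i)_{i\in V_1},\gamma\big)$. Since only $\beta_{i_0}$ is affected and $\partial\gamma/\partial\beta_{i_0}=1$, the Jacobian is $1$, so $\prod_{i\in V}d\beta_i=\big(\prod_{i\in V_1}d\beta_i\big)\,d\gamma$ and the domain $\{H>0\}$ becomes $\{H_{V_1,V_1}>0\}\times\{\gamma>0\}$, giving the indicator $\mathds{1}_{H_{V_1,V_1}>0}\mathds{1}_{\gamma>0}$. Expanding along the blocks, $\langle\theta,H\theta\rangle=\langle\theta_{V_1},H_{V_1,V_1}\theta_{V_1}\rangle-2\theta_{i_0}\langle W_{i_0,\cdot},\theta_{V_1}\rangle+2\beta_{i_0}\theta_{i_0}^2$, and I substitute $2\beta_{i_0}=2\gamma+\langle W_{i_0,\cdot},\hat G^{V_1}W_{i_0,\cdot}\rangle$; the block-inversion formula gives $\langle\eta,H^{-1}\eta\rangle=\langle\eta_{V_1},\hat G^{V_1}\eta_{V_1}\rangle+\tfrac{1}{2\gamma}\check\eta_{i_0}^2$ with $\check\eta_{i_0}$ exactly as in \eqref{eq-checketa-i0}, the square coming from the rank-one correction $\hat G^{V_1}W_{i_0,\cdot}$ combined with $\eta_{i_0}$. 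Completing the square in $\hat\eta_{V_1}=\eta_{V_1}+W_{V_1,i_0}\theta_{i_0}$ then collapses every $\theta_{i_0}$-cross term and produces the clean splitting
\[
\langle\theta,H\theta\rangle+\langle\eta,H^{-1}\eta\rangle-2\langle\theta,\eta\rangle=\Big(\langle\theta_{V_1},H_{V_1,V_1}\theta_{V_1}\rangle+\langle\hat\eta_{V_1},\hat G^{V_1}\hat\eta_{V_1}\rangle-2\langle\theta_{V_1},\hat\eta_{V_1}\rangle\Big)+\Big(2\gamma\theta_{i_0}^2+\tfrac{1}{2\gamma}\check\eta_{i_0}^2-2\theta_{i_0}\check\eta_{i_0}\Big).
\]

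It then remains to assemble the prefactor: using $\det H=2\gamma\,\det H_{V_1,V_1}$ and $\sqrt{\pi/2}\cdot\sqrt2=\sqrt\pi$, the factor $\prod_{i\in V}\theta_i\big/\big(\sqrt{(\pi/2)^{N}}\sqrt{\det H}\big)$ splits as $\prod_{i\in V_1}\theta_i\big/\big(\sqrt{(\pi/2)^{|V_1|}}\sqrt{\det H_{V_1,V_1}}\big)$ times $\theta_{i_0}\big/(\sqrt\pi\sqrt\gamma)$. Combining with the exponential splitting above, the joint density of $\big((\beta_i)_{i\in V_1},\gamma\big)$ is precisely the product of \eqref{eq-density-betaV1} and \eqref{eq-cond-den-betai0}. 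The first factor is a genuine probability density by Theorem~\ref{thm-induction-multiIG-IG} applied to $\mathcal{G}|_{V_1}$ with parameters $(W|_{V_1},\theta_{V_1},\hat\eta_{V_1})$, noting that $\hat G^{V_1}$ is the Green function on this subgraph; the second is a probability density by \eqref{eq-i-IG-dist} with $a=\sqrt2\,\theta_{i_0}$ and $b=\check\eta_{i_0}/\sqrt2$. Hence the factorization identifies \eqref{eq-density-betaV1} as the marginal law of $(\beta_i)_{i\in V_1}$, \eqref{eq-cond-den-betai0} as the conditional law of $\gamma$, and shows that $\gamma$ is independent of $(\beta_i)_{i\in V_1}$. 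Finally, when $\eta\equiv 0$ we get $\check\eta_{i_0}=0$ from \eqref{eq-checketa-i0}, so \eqref{eq-cond-den-betai0} becomes $\mathds{1}_{\gamma>0}\,\tfrac{\theta_{i_0}}{\sqrt\pi}\,\gamma^{-1/2}e^{-\theta_{i_0}^2\gamma}\,d\gamma$, which is the $\mathrm{Gamma}\big(\tfrac12,\tfrac1{\theta_{i_0}^2}\big)$ density.

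The main obstacle is the algebraic identity in the second step: one must check carefully that, after replacing $2\beta_{i_0}$ by the Schur complement and expanding $\langle\eta,H^{-1}\eta\rangle$ via block inversion, the terms $-2\theta_{i_0}\langle W_{i_0,\cdot},\theta_{V_1}\rangle$, $+\theta_{i_0}^2\langle W_{i_0,\cdot},\hat G^{V_1}W_{i_0,\cdot}\rangle$ and $+2\theta_{i_0}\langle W_{i_0,\cdot},\hat G^{V_1}\eta_{V_1}\rangle$ recombine — via the completion of the square in $\hat\eta_{V_1}$ and the definition \eqref{eq-checketa-i0} of $\check\eta_{i_0}$ — into exactly the subgraph quadratic form plus the inverse-Gaussian exponent in $\gamma$. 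This is a finite, mechanical computation, but index-tracking is delicate. A minor technical point is that the block-inversion formulas require $H_{V_1,V_1}$ invertible, which fails only on a Lebesgue-null set and hence does not affect the density computation.
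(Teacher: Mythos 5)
Your proposal is correct and takes essentially the same route as the paper: it re-derives, for the specific partition $V=(V\setminus\{i_0\})\sqcup\{i_0\}$, the Schur-complement factorization \eqref{factor} that the appendix proves for general partitions, and then reads off the marginal/conditional densities from the factorization, exactly as the paper's one-line proof does by citing the appendix. One small caveat: your invocation of \eqref{eq-i-IG-dist} with $a=\sqrt2\,\theta_{i_0}$, $b=\check\eta_{i_0}/\sqrt2$ gives the intended normalization only after correcting the (apparent) missing factor of $2$ in \eqref{eq-i-IG-dist}; as printed that identity evaluates to $2$, not $1$ (cf.\ the $N=1$ case of \eqref{eq-multi-IG}, which forces a prefactor $a/(2\sqrt{\pi/2})$). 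This is a typo in the paper, not a flaw in your argument, and the rest of your algebra — the Schur complement being $2\gamma$, $\det H=2\gamma\det H_{V_1,V_1}$, the block-inversion identities for $G$, and the splitting of the quadratic form via $\hat\eta_{V_1}$ and $\check\eta_{i_0}$ — all checks out.
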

\begin{proof}
This is a corollary of the proof of Theorem \ref{thm-induction-multiIG-IG}, given in the Appendix.
It  is a direct application of the fact that the density factorize into product of \eqref{eq-marginal-of-beta} and \eqref{eq-conditional-density-of-beta}. Equation~(\ref{eq-gamma-is-2betaminus-etc}) follows from definition of Green function, that is, for any \(i\in V\), we have
\begin{equation}
\label{eq-beG=I}
2\beta_i G(i_0,i)-\sum_{j:j\sim i}W_{i,j}G(i_0,j)=\mathds{1}_{i=i_0}.
\end{equation}
\end{proof}
\begin{rmk}
Alternatively, it is possible to prove that \(\gamma\) is  Gamma distributed  via  Laplace transform.  Assume that \(\beta\) is \(\nu^{W,\theta,0}\) distributed. Let \(k>0\), define \((\eta_i)_{i\in V}\) by \(\eta_{i_0}=k\) and \(\eta_i=0\) for \(i\ne i_0\), by \eqref{eq-multi-IG}, we see that the Laplace transform of \(\frac{1}{2 \gamma}\) equals
\[\mathbb{E}(e^{-\frac{1}{2}k^2 H^{-1}(i_0,i_0)})=e^{-\sqrt{k^2 \theta_{i_0}}}.\]
This characterize the distribution of \(\frac{1}{2 \gamma}\), and entails that \(\gamma\) is Gamma distributed.
\end{rmk}
The family of measures \(\nu^{W,\theta,\eta}\) satisfy some useful properties, or more specifically they enjoy some symmetries. For example, we can differentiate with respect to the parameters and obtain useful identities. Identities of this kind are called Ward identities, we list two of them, which will be useful later. For other Ward identities, one can check, for example, \cite{DMR15,disertori2017martingales,DSZ06,SZ15,merkl2016convergence,bauerschmidt2018dynkin}.
Recall that we denote by \(\left< \cdot,\cdot \right>\) the usual inner product for vectors in \(\mathbb{R}^d\). Moreover, for any continuous function \(g \colon \mathbb{R}^d \mapsto \mathbb{R}^+\), we define
\[ \left< g(\beta) \right>_{W, \theta, \eta} :=  \int g(\beta)  \nu^{W, \theta, \eta}(d \beta). \]
\begin{coro}[Ward identities]
Consider a finite  weighted graph \(\mathcal{G}\), let \(\beta\) be \(\mu^{W,\theta,\eta}\) distributed.  For any \(|V|\)-dimensional vector  \(k\), such that \(k_i>0\) for all \(i\in V\), we have
\begin{equation}
\label{eq-laplace-tr-nu-theta-eta}
 \left< e^{-\sum_{i\in V}k_i\beta_i}\right>_{W,\theta,\eta}={\rm e}^{-\left< \eta,\sqrt{k+\theta^2}-\theta \right>-\sum_{i,j \colon i\sim j}W_{i,j}(\sqrt{(\theta_i^2+k_i)(\theta_i^2+k_j)}-\theta_i \theta_j)}\prod_{i\in V}\frac{\theta_i}{\sqrt{\theta_i^2+k_i}}.
\end{equation}
Fix \(l\ne i_0\), let \(\Xi(k, W, \beta)=\frac{G(i_0,l)}{G(i_0,i_0)}\exp\left(\frac{k_{i_0}}{2G(i_0,i_0)}-\sum_{i\in V}k_i \beta_i\right)\), then
\begin{equation}
\label{eq-traj-density-integral}
\Big<\Xi(k, W, \beta)\Big>_{W,\theta,0}=\frac{\prod_{i\ne i_0}\theta_i}{\prod_{i\ne l}\sqrt{k_i+\theta_i^2}}\exp\left( -\sum_{i,j \colon i \sim j}W_{i,j}\left( \sqrt{(k_i+\theta_i^2)(k_j+\theta_j^2)}-\theta_i \theta_j \right) \right).
\end{equation}
\end{coro}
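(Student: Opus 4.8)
We establish the two displays in turn, the second relying on the first.

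\emph{Proof of \eqref{eq-laplace-tr-nu-theta-eta}.} The idea is to change the parameter $\theta$ in the density \eqref{eq-multi-IG}. Put $\theta'=(\sqrt{\theta_i^2+k_i})_{i\in V}$, which still has positive coordinates. Since $H_\beta=2[\beta]-\Delta_W$ is independent of $\theta$, expanding the quadratic form gives $\langle\theta',H_\beta\theta'\rangle=\langle\theta,H_\beta\theta\rangle+2\sum_i k_i\beta_i-2\sum_{i\sim j}W_{i,j}\bigl(\sqrt{(\theta_i^2+k_i)(\theta_j^2+k_j)}-\theta_i\theta_j\bigr)$, while $\langle\theta,\eta\rangle=\langle\theta',\eta\rangle-\langle\eta,\theta'-\theta\rangle$ and $\langle\eta,H_\beta^{-1}\eta\rangle$ is unchanged. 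Therefore
\[
\nu^{W,\theta,\eta}(d\beta)\,e^{-\sum_i k_i\beta_i}=\frac{\prod_i\theta_i}{\prod_i\theta'_i}\,e^{-\langle\eta,\theta'-\theta\rangle-\sum_{i\sim j}W_{i,j}(\sqrt{(\theta_i^2+k_i)(\theta_j^2+k_j)}-\theta_i\theta_j)}\ \nu^{W,\theta',\eta}(d\beta),
\]
and integrating, using that $\nu^{W,\theta',\eta}$ has total mass $1$ by Theorem~\ref{thm-induction-multiIG-IG}, gives \eqref{eq-laplace-tr-nu-theta-eta}. (The same proof is valid for $\eta$ with nonnegative coordinates.)

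\emph{An auxiliary identity.} Differentiating \eqref{eq-laplace-tr-nu-theta-eta} with respect to $\eta_l$ — under the integral on the left, which is legitimate because the region $\{H_\beta>0\}$ and the non-exponential prefactor do not depend on $\eta$ and because, for $\eta>0$, the factor $e^{-\frac12\langle\eta,H_\beta^{-1}\eta\rangle}$ decays near $\{\det H_\beta=0\}$ faster than $H_\beta^{-1}$ grows — and using $\partial_{\eta_l}\langle\eta,H_\beta^{-1}\eta\rangle=2(H_\beta^{-1}\eta)_l$, one obtains
\[
\bigl\langle (H_\beta^{-1}\eta)_l\,e^{-\sum_i k_i\beta_i}\bigr\rangle_{W,\theta,\eta}=\sqrt{\theta_l^2+k_l}\ \bigl\langle e^{-\sum_i k_i\beta_i}\bigr\rangle_{W,\theta,\eta}
\]
(for $k\equiv0$ this reads $\langle H_\beta^{-1}\eta\rangle_{W,\theta,\eta}=\theta$), and by continuity in $\eta$ the identity persists for $\eta\ge0$, $\eta\neq0$.

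\emph{Proof of \eqref{eq-traj-density-integral}.} Apply Proposition~\ref{coro-single-site-decomp} with $V_1=V\setminus\{i_0\}$ and write $\mathbf{W}=(W_{i_0,j})_{j\in V_1}$. By \eqref{eq-beG=I}, $G(i_0,l)/G(i_0,i_0)=(\hat{G}^{V_1}\mathbf{W})_l$, and substituting $\beta_{i_0}=\gamma+\tfrac12\langle\mathbf{W},\hat{G}^{V_1}\mathbf{W}\rangle$ (with $\gamma=1/(2G(i_0,i_0))$ as in \eqref{eq-gamma-is-2betaminus-etc}) the $\gamma$-terms in $\Xi$ cancel, leaving
\[
\Xi(k,W,\beta)=(\hat{G}^{V_1}\mathbf{W})_l\,\exp\Bigl(-\tfrac{k_{i_0}}{2}\langle\mathbf{W},\hat{G}^{V_1}\mathbf{W}\rangle-\sum_{i\in V_1}k_i\beta_i\Bigr),
\]
a function of $(\beta_i)_{i\in V_1}$ alone. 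By Proposition~\ref{coro-single-site-decomp} this vector has law $\nu^{W_{V_1},\theta_{V_1},\hat\eta}$ with $\hat\eta=(\theta_{i_0}W_{i_0,j})_{j\in V_1}$; since $\langle\hat\eta,\hat{G}^{V_1}\hat\eta\rangle=\theta_{i_0}^2\langle\mathbf{W},\hat{G}^{V_1}\mathbf{W}\rangle$, the factor $e^{-\frac{k_{i_0}}{2}\langle\mathbf{W},\hat{G}^{V_1}\mathbf{W}\rangle}$ exactly replaces $\hat\eta$ by $\tilde\eta=(\sqrt{\theta_{i_0}^2+k_{i_0}}\,W_{i_0,j})_{j\in V_1}$ in that density, up to the $\beta$-free factor $e^{\langle\theta_{V_1},\hat\eta-\tilde\eta\rangle}$. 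Using $(\hat{G}^{V_1}\mathbf{W})_l=(\hat{G}^{V_1}\tilde\eta)_l/\sqrt{\theta_{i_0}^2+k_{i_0}}$ and then the auxiliary identity on the graph induced by $V_1$ (note $l\in V_1$), followed by \eqref{eq-laplace-tr-nu-theta-eta} with field $\tilde\eta$,
\[
\bigl\langle\Xi\bigr\rangle_{W,\theta,0}=\frac{\sqrt{\theta_l^2+k_l}}{\sqrt{\theta_{i_0}^2+k_{i_0}}}\,e^{\langle\theta_{V_1},\hat\eta-\tilde\eta\rangle}\,\bigl\langle e^{-\sum_{i\in V_1}k_i\beta_i}\bigr\rangle_{W_{V_1},\theta_{V_1},\tilde\eta}.
\]
Plugging in the explicit value of the last bracket, the non-exponential prefactor collapses to $\prod_{i\neq i_0}\theta_i\big/\prod_{i\neq l}\sqrt{k_i+\theta_i^2}$, and the exponential terms — coming from $\langle\theta_{V_1},\hat\eta-\tilde\eta\rangle$, from $\langle\tilde\eta,\sqrt{k_{V_1}+\theta_{V_1}^2}-\theta_{V_1}\rangle$, and from the edges inside $V_1$, the first two precisely accounting for the edges incident to $i_0$ — recombine into $-\sum_{i\sim j}W_{i,j}(\sqrt{(\theta_i^2+k_i)(\theta_j^2+k_j)}-\theta_i\theta_j)$ over all edges of $\mathcal{G}$. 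This is the right-hand side of \eqref{eq-traj-density-integral}.

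\emph{Where the work is.} The first identity is a one-line substitution and the auxiliary one is immediate once one differentiates in $\eta$. The substance of \eqref{eq-traj-density-integral} is the cancellation of the $\gamma=1/(2G(i_0,i_0))$-randomness inside $\Xi$, which makes $\Xi$ measurable with respect to $(\beta_i)_{i\in V_1}$, together with the observation that the leftover factor $e^{-\frac{k_{i_0}}{2}\langle\mathbf{W},\hat{G}^{V_1}\mathbf{W}\rangle}$ is exactly the field shift $\hat\eta\mapsto\tilde\eta$; after that the auxiliary identity applies directly, and what remains is bookkeeping of Gaussian prefactors together with the routine justification of differentiating under the integral.
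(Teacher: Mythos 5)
Your proof of \eqref{eq-laplace-tr-nu-theta-eta} is correct and is essentially the paper's argument: both absorb $e^{-\sum k_i\beta_i}$ by the change of parameter $\theta\mapsto\sqrt{\theta^2+k}$ and invoke normalization of $\nu^{W,\sqrt{\theta^2+k},\eta}$.

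For \eqref{eq-traj-density-integral} your route diverges from the paper's in an interesting way. Both proofs reduce to the same core statement: after conditioning on $(\beta_i)_{i\neq i_0}$, the ratio $G(i_0,l)/G(i_0,i_0)$ is independent of $\gamma=1/(2G(i_0,i_0))$ and one must evaluate its expectation against a tilted marginal. The paper evaluates this expectation via the identity \eqref{equation-ward-3}, which it proves by rewriting the marginal integral in terms of the ratios $\psi_i=G(i_0,i)/G(i_0,i_0)$ (suitably normalized) and exploiting the $\mathfrak{S}_{|V|}$-symmetry of the resulting integrand over the simplex $\prod_i\psi_i=1$; this is a change-of-variables and symmetry argument (cf.\ Eq.~(B.3) of \cite{DSZ06}). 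You instead obtain the needed expectation from your auxiliary identity $\bigl\langle (G\eta)_l\,e^{-\sum k_i\beta_i}\bigr\rangle_{W,\theta,\eta}=\sqrt{\theta_l^2+k_l}\,\bigl\langle e^{-\sum k_i\beta_i}\bigr\rangle_{W,\theta,\eta}$, derived by differentiating \eqref{eq-laplace-tr-nu-theta-eta} in $\eta_l$. After noting that multiplication by $e^{-\frac{k_{i_0}}{2}\langle\mathbf{W},\hat{G}^{V_1}\mathbf{W}\rangle}$ shifts the induced field $\hat\eta$ on $V_1$ to $\tilde\eta$, and that $(\hat{G}^{V_1}\mathbf{W})_l$ is just $(\hat{G}^{V_1}\tilde\eta)_l$ up to a scalar, you apply this auxiliary identity on the subgraph induced by $V_1$ and collect the prefactors. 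I checked the bookkeeping; it closes correctly, including the recombination of the edge terms incident to $i_0$ from $\langle\theta_{V_1},\hat\eta-\tilde\eta\rangle$ and $\langle\tilde\eta,\sqrt{k_{V_1}+\theta_{V_1}^2}-\theta_{V_1}\rangle$. Your approach buys a cleaner derivation of the crucial $G(i_0,l)/G(i_0,i_0)$-expectation by a pure differentiation of the Laplace transform identity, at the cost of a continuity argument to pass from $\eta>0$ to the boundary case $\eta=\tilde\eta\ge 0$ (with some zero entries); you flag this and it is routine. The paper's symmetrization argument avoids that limit but requires a less transparent change of variables. Both are valid; yours fits more naturally into the ``Ward identity by differentiating a generating function'' template.

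Two small points of rigor worth spelling out if this were to stand alone: (i) your auxiliary identity is derived for $\eta$ with strictly positive coordinates, while you apply it with $\tilde\eta$ supported only on neighbors of $i_0$; the continuity-in-$\eta$ extension is correct but should be stated as applying at a point $\eta\ge 0$, $\eta\neq 0$, with both sides continuous; (ii) when you invoke ``the graph induced by $V_1$'' you are tacitly using that $\hat G^{V_1}=(H_{V_1,V_1})^{-1}$ is exactly the Green function of the random Schrödinger operator built on that subgraph from the same $\beta_{V_1}$ and the restricted weights, which is indeed what Proposition~\ref{coro-single-site-decomp} gives.
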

\begin{proof}
See Appendix.
\end{proof}
\section{The VRJP is a random walk in random environment}
\label{sec-vrjp-rwre}
Recall that we use \(\mathbf{Y}= (Y_t)_{t \ge 0}\) to denote VRJP(W) defined in the introduction. In~\cite{ST15}, Sabot and Tarr\`es introduced the following time change to the VRJP:
\[Z_t=Y_{D^{-1}(t)}, \qquad \mbox{with \(t \in [0, \infty)\)},\]
where \(D(s)\) is the following increasing random time change
\begin{equation}
\label{eq-Chgt-time}
D(s)=\sum_{i\in V} (L^2_i(s)-\theta_i^2)=\sum_{i\in V}S_i(s),
\end{equation}
in particular, \(S_i(s)=\int_0^s \mathds{1}_{Z_u=i}du\) is the local time of \({\bf Z} = (Z_t)\). 
This section is devoted to the study of   \(\mathbf{Z}\), which turns out to be a mixture of Markov jump processes. By definition, conditionally on \(\{Z_s,s\le t,\ Z_t=i\}\), at time \(t\), \(\mathbf{Z}\) jumps from \(i\) to \(j\) at rate
\begin{equation}
\label{eq-jump-rate-Z}
\nonumber
W_{i,j}L_j(t)dL_i(t)=W_{i,j}\sqrt{S_j(t)+\theta_j^2}\ d \sqrt{S_i(t)+\theta_i^2}=\frac{1}{2} W_{i,j}\sqrt{\frac{S_j(t)+\theta_j^2}{S_i(t)+\theta_i^2}}dS_i(t).
\end{equation}

The following theorem is first proved in \cite{ST15}, the short proof we provided here is inspired by \cite{sabot2013ray,STZ15} and \cite{zeng2013vertex}, it is written in the context of the measure \(\nu^{W,\theta,\eta}\), to provide a self-contained treatise of recurrence of the ERRW.
\begin{theoA}
Consider a finite weighted graph \(\mathcal{G} = (V, E, W, \theta)\).  Let  \({\bf Z}\) be the time changed VRJP starting from \(i_0 \in V\) defined by \eqref{eq-Chgt-time}.  We have that \({\bf Z}\) is a mixture of Markov jump processes.

More precisely, if \(H=H_{W,\theta,0}\) with \(\theta>0\), is the random operator associated to \(\mathcal{G}\) and introduced in Definition \ref{defi-nu-G-theta-eta}. Denote by \({\bf P}\) the distribution of \(G = H^{-1}\). Define, for any collection of positive real numbers \((g(i_0,j))_{j\in V}\), a Markov jump process \(({\bf X}, P^g)\). For which under the measure \(P^g\) (called the quenched measure) the process \({\bf X}\) has Markov jump rates \(\frac{1}{2}W_{i,j}\frac{g(i_0,j)}{g(i_0,i)}\) from \(i\) to \(j\). Then the annealed law of \({\bf X}\), defined by
\[ \mathbb{P}(\cdot) = \int P^g(\cdot) {\bf P}(d g),\]
 equals to the law of \(\mathbf{Z}\).
\end{theoA}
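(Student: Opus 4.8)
The plan is to follow the strategy that identifies the law of $\mathbf{Z}$ via its finite-dimensional distributions, using the Ward identity \eqref{eq-traj-density-integral} as the computational engine. First I would fix a finite trajectory of the jump process: a sequence of vertices visited together with the holding times, or equivalently, work with the joint density of the discrete skeleton and the collection of local times $S_i$ at some fixed time $t$. The density of such a trajectory under $\mathbf{Z}$ can be written down explicitly from the jump rates displayed just before the statement, namely $\tfrac12 W_{i,j}\sqrt{(S_j+\theta_j^2)/(S_i+\theta_i^2)}\,dS_i$ along each step, multiplied by the survival factors $\exp(-\int \text{(total rate)})$ accumulated during each holding period. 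The key algebraic observation is that the product of the $\sqrt{S_j+\theta_j^2}$ factors telescopes along the trajectory, leaving only boundary terms involving the starting vertex $i_0$ and the current vertex $l$, so that the trajectory density takes the shape $\frac{\sqrt{S_l+\theta_l^2}}{\sqrt{S_{i_0}+\theta_{i_0}^2}}\exp(-\tfrac12\sum_{i\sim j}W_{i,j}(\sqrt{\cdots}-\theta_i\theta_j))$ times combinatorial weights and Lebesgue measure in the $S_i$'s — exactly the left-hand side of \eqref{eq-traj-density-integral} after the change of variables $k_i = S_i$ (and with $G(i_0,i_0)$ encoding $1/(2\gamma)$).

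Next I would compute, in parallel, the annealed trajectory density under $\mathbb{P}(\cdot)=\int P^g(\cdot)\,\mathbf{P}(dg)$. Under the quenched measure $P^g$ the process $\mathbf{X}$ is an honest Markov jump process with rates $\tfrac12 W_{i,j} g(i_0,j)/g(i_0,i)$, so its trajectory density is a product of these rates along the steps times the usual exponential survival factors $\exp(-S_i \cdot \tfrac12\sum_{j\sim i} W_{i,j} g(i_0,j)/g(i_0,i))$. Again the $g(i_0,\cdot)$ factors in the numerators and denominators telescope along the trajectory, collapsing to $g(i_0,l)/g(i_0,i_0)$. Integrating this against $\mathbf{P}(dg)$, i.e. against the law of $G=H^{-1}$ with $H=H_{W,\theta,0}$, is precisely the expectation $\langle \Xi(k,W,\beta)\rangle_{W,\theta,0}$ with $k_i=S_i$, once one checks that $\tfrac12\sum_{j\sim i}W_{i,j}G(i_0,j)/G(i_0,i) = \beta_i - \gamma\mathds{1}_{i=i_0}$ via the Green-function identity \eqref{eq-beG=I}, so the survival exponent is $-\sum_i k_i\beta_i + \tfrac{k_{i_0}}{2G(i_0,i_0)}$. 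Then \eqref{eq-traj-density-integral} gives a closed form for this annealed density which I would match term-by-term with the $\mathbf{Z}$-density computed in the first step; the normalizing constants $\prod \theta_i$, the products $\prod\sqrt{k_i+\theta_i^2}$, and the interaction term all coincide.

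Having shown that the finite-dimensional densities agree for every finite trajectory (every choice of skeleton and every fixed time horizon), I would conclude by a standard monotone-class / Kolmogorov consistency argument that the two laws on path space coincide, hence $\mathbf{X}$ under $\mathbb{P}$ equals $\mathbf{Z}$ in distribution, which is exactly "$\mathbf{Z}$ is a mixture of Markov jump processes." A small point requiring care: one must phrase the trajectory density correctly as a density in the holding times with the right combinatorial bookkeeping of which edge is traversed, and handle the fact that $\mathbf{Z}$'s rates involve the running local times $S_j(t)$ (time-inhomogeneous, reinforced) whereas $\mathbf{X}$'s rates are constant in time — the reconciliation is exactly the telescoping, and it is worth spelling out that the reinforced factors $\sqrt{S_j(t)+\theta_j^2}$ integrate over the holding intervals to produce the boundary terms rather than bulk terms.

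I expect the main obstacle to be the bookkeeping in the first step: writing the law of the time-changed, vertex-reinforced process $\mathbf{Z}$ explicitly enough as a density over finite trajectories that one can see the telescoping cleanly, and making sure the change of variables from the local times $S_i$ to the Ward-identity variables $k_i$ (together with isolating $\gamma = 1/(2G(i_0,i_0))$) is carried out without sign or Jacobian errors. Once the two densities are in the same variables, the identification is a direct comparison with \eqref{eq-traj-density-integral}; the probabilistic content is entirely in recognizing that reinforcement is "linear enough" that the path weight factorizes into a $g$-independent trajectory part and the exponential-of-quadratic part that is absorbed by the measure $\nu^{W,\theta,0}$.
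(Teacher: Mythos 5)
Your proposal follows essentially the same route as the paper's proof: compute the quenched trajectory density of $\mathbf{X}$ under $P^g$, use the Green-function identity \eqref{eq-beG=I} to recast the sojourn rates as $\beta_i$ (with the correction at $i_0$), telescope the $g$-factors, integrate against $\mathbf{P}(dg)$ via the Ward identity \eqref{eq-traj-density-integral}, and match the result to the trajectory density of $\mathbf{Z}$ after observing that the reinforced rates telescope and the survival factor is the exact time derivative of the interaction term. The only cosmetic difference is that you make the final identification of laws explicit via a monotone-class argument, which the paper leaves implicit.
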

\begin{proof}
 By (\ref{eq-beG=I}), under the  quenched measure \(P^g\), the process \({\bf X}\) has   sojourn time rate at \(i\in V\) equal to
\[\sum_{j:j\sim i}\frac{1}{2} W_{i,j}\frac{g(i_0,j)}{g(i_0,i)}= \begin{cases}\beta_i & i\ne i_0\\ \beta_{i_0}-\frac{1}{2g(i_0,i_0)} & i=i_0\end{cases}, \]

Given a trajectory
\begin{align*}
  \sigma: X_{[0,t_1)}=i_0,\ X_{[t_1,t_2)}=i_1,\ \cdots ,\ X_{[t_{n-1},t_n)}=i_{n-1},\ X_{[t_n,t)}=i_n,
\end{align*} let \(S_i(t)\) be the local time of \({\bf X}\) up to time \(t\). The quenched trajectory density (see Definition 4 of \cite{zeng2013vertex}) of \({\bf X}\) equals
\[f^X_{\text{quenched}}(\sigma)=\prod_{k=1}^n \left( \frac{1}{2} W_{i_{k-1},i_k} \right) \frac{g(i_0,i_n)}{g(i_0,i_0)}\exp\left( -\sum_{i\in V_1}S_i(t) \beta_i+\frac{S_{i_0}(t)}{2g   (i_0,i_0)}\right).\]
By the Ward identity \eqref{eq-traj-density-integral}, the annealed trajectory density of \({\bf X}\) is
\begin{align*}
  &f^X_{\text{annealed}}(\sigma)\\
  &=\prod_{k=1}^n \left( \frac{1}{2} W_{i_{k-1},i_k} \right) \frac{\prod_{i \colon i\ne i_0}\theta_i}{\prod_{i \colon i\ne i_n}\sqrt{S_i(t)+\theta_i^2}}\exp\left( -\sum_{i,j \colon i\sim j}W_{i,j}\left(\sqrt{(S_i(t)+\theta_i^2)(S_j(t)+\theta_j^2)}-\theta_i \theta_j\right) \right).
\end{align*}
On the other hand, the trajectory density of \({\bf Z}\) equals (we abuse notation and denote by \(S_i(t)\) the local time of \({\bf Z}\))
\begin{align*}
  f^{Z}(\sigma)&=\prod_{k=1}^{n}\left( \frac{1}{2} W_{i_{k-1},i_k}\sqrt{\frac{S_{i_k}(t_{k-1})+\theta_{i_k}^2}{S_{i_{k-1}}(t_{k})+\theta_{i_{k-1}}^2}} \right) \cdot \exp\left( -\int_0^t \sum_{j:j\sim Z_v}\frac{1}{2}W_{Z_v,j}\sqrt{\frac{S_j(v)+\theta_j^2}{S_{Z_v}(v)+\theta_{Z_v}^2}} \right)\\
  &=\prod_{k=1}^n \left( \frac{1}{2} W_{i_{k-1},i_k} \right) \frac{\prod_{i\ne i_0}\theta_i}{\prod_{i\ne i_n}\sqrt{S_i(t)+\theta_i^2}} \cdot \exp\left( -\int_0^t \sum_{j:j\sim Z_v}\frac{1}{2}W_{Z_v,j}\sqrt{\frac{S_j(v)+\theta_j^2}{S_{Z_v}(v)+\theta_{Z_v}^2}} \right).
\end{align*}
Now since
\[\frac{d}{dt}\left( \sum_{i,j \colon i\sim j}W_{i,j}\left(\sqrt{(S_i(t)+\theta_i^2)(S_j(t)+\theta_j^2)}-\theta_i \theta_j\right) \right)=\sum_{j:j\sim Z_t}\frac{1}{2}W_{Z_t,j}\sqrt{\frac{S_j(t)+\theta_j^2}{S_{Z_t}(t)+\theta_{Z_t}^2}},\]
we conclude that \(f^Z(\sigma)=f^X_{\text{annealed}}(\sigma)\), hence the law of \({\bf X}\) and \({\bf Z}\) are equal.
\end{proof}
The following corollary is immediate.
\begin{coro}
\label{coro-vrjp-is-random-conductance}
Fix a finite  weighted graph \(\mathcal{G}\). Let \(H_\beta\) the random operator associated to \(\mathcal{G}\) (as per Definition~\ref{defi-nu-G-theta-eta}) and \(G=H^{-1}_\beta\). The discrete time process \((Y_n)\) associated to the VRJP is a random conductance  model, where the conductance on the edge \(\{i,j\}\in E\) is \(C_{i,j}=W_{i,j}G(i_0,i)G(i_0,j)\).
\end{coro}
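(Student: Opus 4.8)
The plan is to deduce the statement directly from the theorem above, together with the standard description of the embedded jump chain of a continuous-time Markov jump process. First I would note that the discrete-time process $(Y_n)$, i.e. the ordered sequence of vertices visited by $\mathbf Y$, is unaffected by the time change $Z_t = Y_{D^{-1}(t)}$: since $s\mapsto D(s)$ is continuous and strictly increasing, $\mathbf Z$ traverses the same vertices in the same order as $\mathbf Y$, so $(Y_n)$ is also the skeleton of $\mathbf Z$. By the theorem above, conditionally on $G=H^{-1}$ the process $\mathbf Z$ is a Markov jump process with jump rate $q(i,j)=\tfrac12 W_{i,j}\,G(i_0,j)/G(i_0,i)$ from $i$ to $j$.

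Next I would use that the embedded jump chain of a Markov jump process with rates $q(i,j)$ has transition probabilities $p(i,j)=q(i,j)\big/\sum_{k\sim i}q(i,k)$. Here the prefactor $\tfrac12/G(i_0,i)$ is common to numerator and denominator, hence
\[ p(i,j)=\frac{W_{i,j}\,G(i_0,j)}{\sum_{k\sim i}W_{i,k}\,G(i_0,k)}. \]
Setting $C_{i,j}:=W_{i,j}\,G(i_0,i)\,G(i_0,j)$ for $\{i,j\}\in E$, one has $C_{i,j}=C_{j,i}\ge 0$, so $(C_{i,j})$ is a (random) conductance network on $\mathcal G$; since $G(i_0,i)$ does not depend on $j$, it factors out of the ratio $C_{i,j}\big/\sum_{k\sim i}C_{i,k}$, which therefore equals $p(i,j)$. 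Thus, conditionally on $G$, $(Y_n)$ is exactly the random walk in the conductances $(C_{i,j})$. Integrating over the law $\mathbf P$ of $G=H_\beta^{-1}$ with $\beta\sim\nu^{W,\theta,0}$ then gives the annealed statement.

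I do not expect any real obstacle: the one point worth spelling out is the passage from the non-symmetric rates $q(i,j)$ to the symmetric conductances $C_{i,j}$, obtained by multiplying the rate out of $i$ by the reversing weight $G(i_0,i)^2$ (the jump process being reversible with respect to $i\mapsto G(i_0,i)^2$). Self-loops, which $\mathcal G$ is allowed to carry, are handled verbatim with $C_{i,i}=W_{i,i}G(i_0,i)^2$ and play no role in the argument.
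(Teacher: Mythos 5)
Your proof is correct and is exactly the argument the authors intend by calling the corollary ``immediate'': the time change $Z_t=Y_{D^{-1}(t)}$ is continuous and strictly increasing so $\mathbf Y$ and $\mathbf Z$ have the same skeleton, and the jump chain of the quenched Markov jump process with rates $q(i,j)=\tfrac12 W_{i,j}G(i_0,j)/G(i_0,i)$ has transitions $C_{i,j}/\sum_k C_{i,k}$ with $C_{i,j}=W_{i,j}G(i_0,i)G(i_0,j)$, since the factor $\tfrac12 G(i_0,i)$ is $j$-independent. The paper gives no proof, so there is nothing to contrast; your remark on reversibility with respect to $i\mapsto G(i_0,i)^2$ is accurate (and a useful observation), and the aside on self-loops is harmless, though note the paper sets $W_{i,i}=0$ in the Laplacian so no self-loop rates actually arise.
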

\section{Recurrence with strong reinforcement on \(\mathbb{Z}^d\)}
\label{sec-rec}
We can actually define the random potential \(\beta\) on a infinite graph, see Proposition 1 of \cite{SZ15} for the case \(\eta\equiv 0\).
\begin{theoA}
\label{thmA-kolmogorov-extension}
One can extend to \(\nu^{W,\theta,\eta}\) to the entire \(\mathbb{Z}^d\). If \((\beta_i)_{i\in \mathbb{Z}^d}\) is \(\nu^{W,\theta,\eta}\) distributed, its law is characterized by the Laplace transform of its finite dimensional marginals: for any finite collection of vertices \(V\subset \mathbb{Z}^d\),
\begin{equation}
\label{eq-finite-marginal-laplace-zd}
\begin{aligned}
\left< {\rm e}^{-\sum_{i\in V}k_i \beta_i}\right>_{W,\theta,\eta}
={\rm e}^{-\sum_{i\in V}(\eta_i\sqrt{k_i+\theta_i^2}-\theta_i)-\sum_{i\in V,j\in \mathbb{Z}^d,i\sim j}W_{i,j} \left( \sqrt{(\theta_i^2+k_i)(\theta_j^2+k_j)}-\theta_i \theta_j \right)}\prod_{i\in V}\frac{\theta_i}{\sqrt{\theta_i^2+k_i}},
\end{aligned}
\end{equation}
where \((k_i)_{i\in V}\) is such that \(k_i>0\), and conventionally \(k_i=0\) if \(i\notin V\).
\end{theoA}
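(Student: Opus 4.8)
The plan is to realise $\nu^{W,\theta,\eta}$ on $\mathbb{Z}^d$ by the Kolmogorov extension theorem, using as finite-dimensional data the finite-volume measures provided by Theorem~\ref{thm-induction-multiIG-IG}. The observation that makes everything go through is that, once the test vector $k$ is supported on a fixed finite set $V$, the Laplace transform \eqref{eq-laplace-tr-nu-theta-eta} computed on a large box does not depend on the box.

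First I would fix a finite $V\subset\mathbb{Z}^d$, put $\Lambda_n=[-n,n]^d\cap\mathbb{Z}^d$, and for $n$ so large that $\Lambda_n$ contains $V$ and all neighbours of vertices of $V$, let $\beta^{(n)}$ be $\nu^{W,\theta,\eta}$-distributed on the weighted subgraph induced on $\Lambda_n$ (legitimate by Theorem~\ref{thm-induction-multiIG-IG}) and $\mu_V^{(n)}$ the law of $(\beta^{(n)}_i)_{i\in V}$. Applying \eqref{eq-laplace-tr-nu-theta-eta} on $\Lambda_n$ to a vector $k$ with $k_i>0$ for $i\in V$ and $k_i=0$ elsewhere (allowed by letting the positive coordinates decrease to $0$ and using monotone convergence), and noting that $\sqrt{\theta_j^2+0}-\theta_j=0$ and $\theta_j/\sqrt{\theta_j^2}=1$, all factors and summands attached to vertices and edges lying outside $V$ drop out, and the right-hand side collapses to the right-hand side of \eqref{eq-finite-marginal-laplace-zd} — an expression independent of $n$. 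Since the diagonal entries $2\beta^{(n)}_i=H_{\beta^{(n)}}(i,i)$ of a positive definite matrix are positive, $\beta^{(n)}$ lives on $(0,\infty)^{\Lambda_n}$, and a positive random vector is determined by its Laplace transform on $(0,\infty)^V$; hence $\mu_V^{(n)}$ does not depend on $n$, and I call it $\mu_V$.

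Next I would check consistency: for finite $V\subset V'$, picking $n$ large enough to serve $V'$, the measure $\mu_{V'}=\mu^{(n)}_{V'}$ is the law of $(\beta^{(n)}_i)_{i\in V'}$, whose projection onto the coordinates in $V$ is $\mu_V^{(n)}=\mu_V$. So $\{\mu_V\}$ (indexed by finite subsets of $\mathbb{Z}^d$) is a consistent family of Borel probability measures on countable products of copies of $\mathbb{R}$, and the Kolmogorov extension theorem yields a probability measure $\nu^{W,\theta,\eta}$ on $\mathbb{R}^{\mathbb{Z}^d}$ (carried by $(0,\infty)^{\mathbb{Z}^d}$) with these marginals, which by the first step are exactly the ones with Laplace transform \eqref{eq-finite-marginal-laplace-zd}. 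For the uniqueness/characterisation claim, any probability measure on $\mathbb{R}^{\mathbb{Z}^d}$ whose finite-dimensional Laplace transforms are given by \eqref{eq-finite-marginal-laplace-zd} has all its finite-dimensional marginals equal to the $\mu_V$ (positive vectors being pinned down by their Laplace transforms), hence agrees on the $\pi$-system of cylinder sets and so is unique.

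I do not expect a real obstacle: the single point of substance is the remark in the first step that the explicit product formula \eqref{eq-laplace-tr-nu-theta-eta} is insensitive to the ambient box once $k$ is localised — so no compatibility of the finite-volume measures themselves is needed, only of their $V$-marginals — after which Kolmogorov's theorem does the work. The remaining steps (extending \eqref{eq-laplace-tr-nu-theta-eta} from $k>0$ to $k\ge0$, and Laplace-uniqueness for positive random vectors) are routine.
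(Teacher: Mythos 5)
Your proof is correct and takes essentially the same route as the paper, which dispatches the statement in one sentence as "a direct application of the Kolmogorov extension theorem" together with the independence of $\beta_i,\beta_j$ for $i\nsim j$; you have simply spelled out the details — in particular that the finite-box Laplace transform \eqref{eq-laplace-tr-nu-theta-eta} stabilises once the box contains $V$ and its neighbours (which is the precise form of the locality the paper alludes to), that this gives a consistent family of $V$-marginals, and that Laplace-uniqueness for positive random vectors pins the limit down.
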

\begin{proof}
This is a direct application of the Kolmogorov extension theorem, and the fact that by \eqref{eq-laplace-tr-nu-theta-eta}, \(\beta_i\) and \(\beta_j\) are independent if \(i\nsim j\).
\end{proof}
\begin{rmk}
\label{rmk-scaling}
In particular, if \(\beta\) is \(\nu^{W,\theta,0}\) distributed, then \(\theta^2 \beta=(\theta_i^2 \beta_i)_{i\in V}\) is \(\nu^{W^{\theta},1,0}\) distributed, where \(W^{\theta}_{i,j}=W_{i,j}\theta_i \theta_j\). Moreover, in the case where \(W\) and \(\theta\) are constant, if \(\beta\) is \(\nu^{W,1,0}\) distributed, then \(\beta/W\) is \(\nu^{1,\sqrt{W},0}\) distributed.
\end{rmk}

In the sequel, we focus on the graph \(\mathbb{Z}^d\). In order to have homogeneity, we assume that  \(\theta_i\) and \(W_{i,j}\) are constant, equals to \(\theta\) and \(W\) respectively. Define the random operator associated to \(\mathbb{Z}^d\), by Definition~\ref{RSO}, it is of the form \(H_{\beta}=2 [\beta] -\Delta_W\), where \(\beta\) is \(\nu^{W,\theta,0}\) distributed, as in Theorem~\ref{thmA-kolmogorov-extension}. Recall that, \(1/2 \beta_i\) is an IG\(\left( \left( \sum_{j:j\sim i}W_{i,j}\theta_j/\theta_i \right)^{-1},\theta_i^2 \right)\) distributed random variable, in particular, the variance of \(\beta_i\) is (since \(W_{i,j}\) and \(\theta_i\) are constant):
\[\left< \beta_i^2 \right>_{W,\theta,0}-\left< \beta_i \right>_{W,\theta,0}^2=\frac{1}{2 \theta_i^2}+\frac{\sum_{j}W_{i,j}\theta_j}{4 \theta_i^3}=\frac{1+dW}{2 \theta^2}.\]
To conjugate our language to the one  that is usually used in the context of  random Schrödinger operator, we have to normalize \(H_{\beta}\) in such a way that the kinetic energy is associated to the unweighted Laplacian \(\Delta\) (that is, with entry 1 or 0 depending whether an edge is present), so we {would} rather consider \(H_{\beta}/W\) instead, and then the coupling constant is the variance of the potential \(2\beta/W\), that is
\[\frac{2}{\theta^2W^2}+\frac{2d}{\theta^2W}.\]
In particular, either \(\theta\) small or \(W\) small will entail large variance, i.e. strong disorder, and we would expect localization.

For a finite box \(\Lambda\) of \(\mathbb{Z}^d\), consider the finite  weighted graph induced by the wired boundary condition, denoted \(\widetilde{\Lambda}\), and defined as follows. Let \(\Lambda=(V,E, W, \theta)\).  The set of vertices of \(\widetilde{\Lambda}\), denoted by  \(V\cup\{\delta\}\) where \(\delta\) is an additional vertex. The graph \(\widetilde{\Lambda}\) is obtained from  \(\Lambda\) by adding edges connecting \(\delta\) to each of the vertices on the boundary, which in turn is defined as \(\{i \colon \exists j \sim i, j \notin V\}\). Moreover
\begin{equation}
\label{eq-def-wired-W}
W_{\delta,i}=\sum_{j:j\sim i, j\notin V}W_{i,j}
\end{equation}
and \(\theta_{\delta}=\theta\).
\begin{lem}
\label{lem-moment-of-greenfct}
There exists a universal constant \(C\), such that for any finite graph \(\mathcal{G}=(V,E)\), in particular, finite boxes of \(\mathbb{Z}^d\), if \(H=H_{W,\theta,0}\) is the associated operator, with  \(\eta\equiv 0\). Then, for any \(i_0\in V\)
\begin{equation}
\label{eq-bound-on-moment-dia-green}
\nonumber
\mathbb{E}\left[  H^{-1}(i_0,i_0)^{1/4} \right]\le C\sqrt{\theta}.
\end{equation}
Note that this bound is completely independent of \(W\).
\end{lem}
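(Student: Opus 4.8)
The plan is to observe that the distribution of the diagonal entry $H^{-1}(i_0,i_0)$ is \emph{universal}: it depends neither on the graph $\mathcal{G}$ nor on the edge weights $W$, only on $\theta_{i_0}$. This is exactly the content of Proposition~\ref{coro-single-site-decomp} when $\eta\equiv 0$: writing $G=H^{-1}$, the random variable $\gamma := \tfrac{1}{2G(i_0,i_0)}$ is Gamma$\left(\tfrac12,\tfrac{1}{\theta_{i_0}^2}\right)$ distributed, i.e.\ by \eqref{eq-cond-den-betai0} with $\check\eta_{i_0}=0$ it has density $\frac{\theta_{i_0}}{\sqrt\pi}\,\gamma^{-1/2}e^{-\theta_{i_0}^2\gamma}\,\mathds{1}_{\gamma>0}$. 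This is the source of the $W$-independence advertised right after the statement, and it reduces the lemma to a single negative fractional moment of a Gamma variable.

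Given this, the estimate is a one-line computation. Since $H^{-1}(i_0,i_0)=(2\gamma)^{-1}$,
\[
\mathbb{E}\!\left[H^{-1}(i_0,i_0)^{1/4}\right]=2^{-1/4}\,\mathbb{E}\!\left[\gamma^{-1/4}\right]=\frac{2^{-1/4}\theta_{i_0}}{\sqrt\pi}\int_0^\infty\gamma^{-3/4}e^{-\theta_{i_0}^2\gamma}\,d\gamma=\frac{\Gamma(1/4)}{2^{1/4}\sqrt\pi}\,\sqrt{\theta_{i_0}},
\]
where the last step uses $\int_0^\infty\gamma^{-3/4}e^{-\theta_{i_0}^2\gamma}\,d\gamma=\Gamma(1/4)\,\theta_{i_0}^{-1/2}$. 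In the homogeneous setting $\theta_{i_0}=\theta$, so the lemma holds with the explicit universal constant $C=\Gamma(1/4)/(2^{1/4}\sqrt\pi)\approx 1.72$, which also feeds the bound \eqref{eq:lowb} on $\overline{W}_d$.

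The only point requiring care — and the sense in which this is not entirely automatic — is that the fractional exponent must lie strictly below $1/2$: the Gamma shape parameter here is only $\tfrac12$, so $\mathbb{E}[\gamma^{-p}]=\Gamma(\tfrac12-p)\,\theta_{i_0}^{2p}/\Gamma(\tfrac12)$ is finite precisely for $p<\tfrac12$, and in particular $\mathbb{E}[H^{-1}(i_0,i_0)]=+\infty$. Hence no bound on the full first moment of the Green function is available, and one genuinely has to pass to fractional moments in the spirit of Aizenman and Molchanov; the exponent $1/4$ is simply a convenient value in the admissible range $(0,\tfrac12)$, chosen to match later the Gamma-distributed conductances produced by Corollary~\ref{coro-vrjp-is-random-conductance}. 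If one prefers to avoid invoking Proposition~\ref{coro-single-site-decomp}, the alternative Laplace-transform argument recorded in the remark following it identifies $\tfrac12 H^{-1}(i_0,i_0)$ as a one-sided stable-$\tfrac12$ variable and leads to exactly the same negative-moment integral.
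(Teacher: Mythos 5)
Your proof is correct and is exactly the paper's argument: both invoke Proposition~\ref{coro-single-site-decomp} to identify $\gamma=1/(2H^{-1}(i_0,i_0))$ as a Gamma$(\tfrac12,\theta^2)$ variable and then compute the negative fractional moment $\mathbb{E}[(2\gamma)^{-1/4}]$ directly. As a small aside, your explicit constant $C=\Gamma(1/4)/(2^{1/4}\sqrt\pi)$ is the correct evaluation of that integral (the paper's displayed $2^{1/3}$ and its phrasing ``$\tfrac12 H^{-1}(i_0,i_0)$ is a Gamma variable'' in place of ``$1/(2H^{-1}(i_0,i_0))$'' appear to be typographical slips), and your remark that the exponent must lie in $(0,\tfrac12)$ correctly pinpoints why fractional moments are unavoidable here.
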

\begin{proof}
{We recall that $H^{-1}(i_0,i_0)>0$}. By Corollary~\ref{coro-single-site-decomp}, \(\frac{1}{2} H^{-1}(i_0,i_0)\) is a Gamma variable with parameter \((\frac{1}{2},\frac{1}{\theta^2})\), Hence 
\[\E\left[\frac{1}{2} H^{-1}(i_0,i_0)^{1/4}\right] =  \frac{\Gamma(1/4)}{2^{1/3}\sqrt{\pi}}\sqrt{\theta}.\] 
\end{proof}
From now on, we denote by  \(x\rightarrow y\) the collection of paths connecting \(x\)  to \(y\)  for any pair of vertices \(x\) and \(y\). Moreover, for any connected set \(A  \subset V\), containing \(x, y\),  denote by  \((x\rightarrow y, A)\) the collection of paths connecting \(x\)  to \(y\) and whose vertices belong to \(A\) only.    For any path \(\sigma\), we denote by \(|\sigma|\) its length.
\begin{prop}[Random walk expansion]
\label{orgeb8276c}
Let \(H_\beta\) be a random Schrödinger operator on a finite graph , and let  \(G\) be  relative Green function, i.e. \(G = H^{-1}_\beta\). Let \(\sigma:x\to y\) be a finite admissible path connecting \(x,y\in V\), that is, \(\sigma=(\sigma_1=x, \sigma_2 ,\ldots,\sigma_{|\sigma|}=y)\), define
\[W_{\sigma}=\prod_{k=1}^{|\sigma|-1}W_{\sigma_k,\sigma_{k+1}},\ \ (2 \beta)_{\sigma}=\prod_{k=1}^{|\sigma|}(2 \beta_{\sigma_k}),\]
then we have
\[G(x,y)=\sum_{\sigma \in x\to y}\frac{W_{\sigma}}{(2 \beta)_{\sigma}}.\]
\end{prop}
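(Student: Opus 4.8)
The plan is to expand the Green function $G = H_\beta^{-1}$ as a Neumann-type series in the off-diagonal part of $H_\beta$. Write $H_\beta = 2[\beta] - \Delta_W = 2[\beta]\bigl(I - (2[\beta])^{-1}\Delta_W\bigr)$, where $[\beta]$ is the diagonal matrix of $\beta_i$ (invertible on the event $\{H_\beta > 0\}$, since positive-definiteness forces every $\beta_i > 0$). Formally inverting gives
\begin{equation}
\nonumber
G = \bigl(I - (2[\beta])^{-1}\Delta_W\bigr)^{-1}(2[\beta])^{-1} = \sum_{n \ge 0}\bigl((2[\beta])^{-1}\Delta_W\bigr)^n (2[\beta])^{-1}.
\end{equation}
Reading off the $(x,y)$ entry, each term in the $n$-fold product contributes exactly over walks $\sigma$ of length $n+1$ from $x$ to $y$: the factor $(2[\beta])^{-1}$ at each visited vertex $\sigma_k$ gives $1/(2\beta_{\sigma_k})$, and each application of $\Delta_W$ supplies a hopping weight $W_{\sigma_k,\sigma_{k+1}}$ (using $\Delta_W$ has entries $W_{i,j}$ and vanishing diagonal). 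Collecting, $G(x,y) = \sum_{\sigma: x \to y} W_\sigma / (2\beta)_\sigma$, which is precisely the claimed identity.

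The only real issue is convergence of the Neumann series, i.e. justifying that the spectral radius of $A := (2[\beta])^{-1}\Delta_W$ is strictly less than $1$ on the event $\{H_\beta > 0\}$, so that the manipulation above is legitimate rather than merely formal. I would argue this as follows. Since $H_\beta > 0$, we have $2[\beta] > \Delta_W$ as quadratic forms; since additionally $\Delta_W$ has nonnegative entries and $[\beta]$ is positive diagonal, the matrix $M := [\beta]^{-1/2}\Delta_W[\beta]^{-1/2}$ is symmetric with $M < 2I$, and also $-M < 2I$ is not automatic — but one can instead use that $H_\beta > 0$ on a finite graph together with $\Delta_W \ge 0$ entrywise to get, by a Perron--Frobenius argument on $|A|$ (the matrix of absolute values of entries of $A$, which equals $A$ up to the sign pattern and in fact $A$ itself has nonnegative entries since $\beta_i > 0$), that the Perron eigenvalue of $A$ is $< 1$. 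Concretely: $A$ has nonnegative entries, so its spectral radius equals its Perron eigenvalue $\lambda$ with a nonnegative eigenvector $v$; then $\Delta_W v = 2\lambda [\beta] v$, and pairing with $v$ gives $\langle v, H_\beta v\rangle = 2\langle v,[\beta]v\rangle - \langle v,\Delta_W v\rangle = 2(1-\lambda)\langle v,[\beta]v\rangle > 0$, forcing $\lambda < 1$. Hence $\sum_n A^n$ converges to $(I-A)^{-1}$, all rearrangements are absolutely convergent, and the termwise reading is valid.

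With convergence in hand, the remaining steps are bookkeeping: verify that the $(x,y)$ entry of $A^n (2[\beta])^{-1}$ is exactly $\sum_{|\sigma| = n+1,\ \sigma: x\to y} W_\sigma/(2\beta)_\sigma$ by induction on $n$ (the base case $n=0$ is the diagonal entry $1/(2\beta_x)\,\mathds{1}_{x=y}$, and the inductive step appends one edge), and then sum over $n$. I expect the main obstacle to be precisely the spectral-radius bound — making sure the positive-definiteness of $H_\beta$ is converted cleanly into $\rho(A) < 1$ on a finite graph with self-loops allowed; once that is pinned down the combinatorial identification of terms with paths is routine. A self-loop at a vertex $i$ contributes, in this accounting, a diagonal entry $W_{i,i}$ to $\Delta_W$ — so strictly I should note whether the convention here keeps $\Delta_W$ with zero diagonal (as stated: $W_{i,i}=0$) in which case paths never repeat a vertex via a loop, consistent with the statement's definition of admissible paths.
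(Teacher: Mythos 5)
Your proof is correct and follows essentially the same route as the paper: factor $H_\beta$ as a diagonal part times $(I-A)$ with $A=(2[\beta])^{-1}\Delta_W$ having nonnegative entries, invoke Perron--Frobenius to deduce spectral radius $<1$ from positive definiteness, and read off the Neumann series entrywise as a sum over paths. The only difference is that you spell out the Perron--Frobenius step in full (taking the nonnegative Perron eigenvector $v$ and pairing $\langle v, H_\beta v\rangle = 2(1-\lambda)\langle v,[\beta]v\rangle>0$), where the paper simply cites Perron--Frobenius without detail; this is a useful completion of that cite, not a departure from it.
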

\begin{proof}
Let \(D\) be the diagonal matrix  \(2[ \beta]\) (for this notation see the paragraph right below Definition~\ref{RSO}).  We have \(H=(I-W \Delta D^{-1})D\).  Since \(H>0\) a.s., we have, by Perron--Frobenius that  \(\Vert W\Delta D^{-1}\Vert<1\). Therefore, we can write
\[G=D^{-1}\sum_{k=0}^{\infty}(W\Delta D^{-1})^k,\]
which is the random walk expansion.
\end{proof}

\begin{proof}[Proof of Theorem~\ref{LERRW} part 1)]
By Remark~\ref{rmk-scaling}, it suffice to deal with the case \(\theta =1\) and \((\widetilde{W}_e)_{e \in E}\) random and independent.
Set  
$$W\rq{} = \sup_e \E[\widetilde{W}^{1/4}_e] < \overline{W}_d. $$
We will combine the random walk expansion and the fact that diagonal Green function is a Gamma variable. Notice that the collection of paths  \(\sigma:0\to x\), can be decomposed as follows.

\begin{figure}
\label{fig1}
\begin{center}
\includegraphics[width=.9\linewidth]{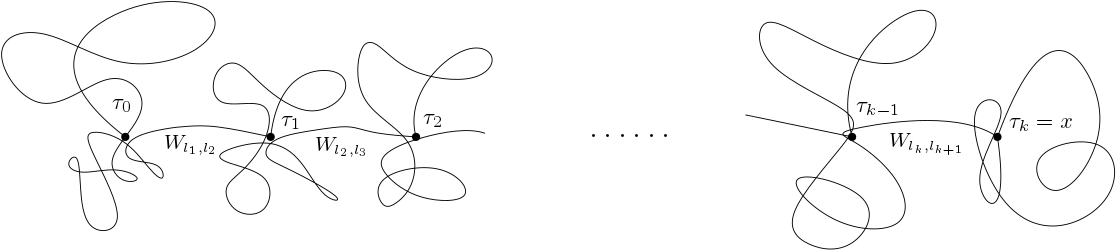}
\caption{A path is decomposed into its loop-erased path along with dangling cycles.}
\end{center}
\end{figure}
As shown in Figure~1, the collection of paths from \(x\) to \(y\), can be seen as the collection of self-avoiding paths from \(x\) to \(y\) (denoted by \(\tau\) in the figure) along with the collection of paths from \(\tau_i\) to \(\tau_i\) for all \(i\) (which are the loops).

We can therefore factorize the random walk expansion according to this path cut, and write
\begin{align*}
&G(0,x)=\sum_{\sigma \in 0\to x}\frac{\widetilde{W}_{\sigma}}{(2 \beta)_{\sigma}}\\
&=\sum_{\tau \in 0\Rightarrow x} \left( \sum_{\pi_0\in 0\to 0} \frac{\widetilde{W}_{\pi_0}}{(2 \beta)_{\pi_0}} \right)\widetilde{W}_{l_1,l_2}\left( \sum_{\pi_1 \in (\tau_1\to \tau_1, V\setminus \{\tau_0\})} \frac{\widetilde{W}_{\pi_1}}{(2 \beta)_{\pi_1}} \right)\widetilde{W}_{l_2,l_3} \cdots \widetilde{W}_{l_k,l_{k+1}}\\
& \left( \sum_{\pi_k \in (\tau_k\to \tau_k, V\setminus\{\tau_0 ,\ldots,\tau_{k-1}\})}\frac{\widetilde{W}_{\pi_k}}{(2 \beta)_{\pi_k}} \right)
\end{align*}
Notice that \(\left( \sum_{\pi_0\in 0\to 0} \frac{\widetilde{W}v_{\pi_1}}{(2 \beta)_{\pi_1}} \right)=G(0,0)\), it is distributed like  the reciprocal of a Gamma random variable, and it is independent of the rest. Unfortunately, the reciprocal of Gamma random variable do not have first moment, thus we compute a fractional moment instead, and use the following bound which holds for any  \(a_1 ,\ldots,a_n>0\), 
\[\left( \sum_i a_i \right)^{1/4}\le \sum_{i} a_i^{1/4}.\]
Hence, if we denote \(C=\mathbb{E}(G(0,0)^{1/4}\), which is a constant, and independent of \((\widetilde{W}_{i,j})\) by Lemma~\ref{lem-moment-of-greenfct}. We have
\begin{align*}
&\mathbb{E}[G(0,x)^{1/4}]\\
&\le \sum_{\tau:0\Rightarrow x} \mathbb{E}\Big[ \Big(G(0,0)\widetilde{W}_{l_1,l_2}  \Big( \sum_{\pi_1:\tau_1\to \tau_1\in V\setminus \{\tau_0\}} \frac{\widetilde{W}_{\pi_1}}{(2 \beta)_{\pi_1}} \Big)\widetilde{W}_{l_2,l_3} \widetilde{W}_{l_k,l_{k+1}} \left( \sum_{\pi_k:\tau_k\to \tau_k\in V\setminus\{\tau_0 ,\ldots,\tau_{k-1}\}}\frac{\widetilde{W}_{\pi_k}}{(2 \beta)_{\pi_k}} \right)\Big)^{1/4}\Big]\\
&= W\rq{}C \sum_{\tau:0\Rightarrow x}\mathbb{E}\left[ \left(\left( \sum_{\pi_1:\tau_1\to \tau_1\in V\setminus \{\tau_0\}} \frac{\widetilde{W}_{\pi_1}}{(2 \beta)_{\pi_1}} \right)\widetilde{W}_{l_2,l_3} \cdots \widetilde{W}_{l_k,l_{k+1}} \left( \sum_{\pi_k:\tau_k\to \tau_k\in V\setminus\{\tau_0 ,\ldots,\tau_{k-1}\}}\frac{\widetilde{W}_{\pi_k}}{(2 \beta)_{\pi_k}} \right)\right)^{1/4} \right]
\end{align*}
Notice that 
\[ \left( \sum_{\pi_1:\tau_1\to \tau_1\in V\setminus \{\tau_0\}} \frac{\widetilde{W}_{\pi_1}}{(2 \beta)_{\pi_1}} \right) \le \sum_{\pi:\tau_1\to \tau_1}\frac{\widetilde{W}_{\pi}}{(2 \beta)_{\pi}}=G(\tau_1,\tau_1)\]
and the rest inside the expectation is independent of \(G(\tau_1,\tau_1)\). Hence continuing our inequality we have
\begin{align*}
\mathbb{E}[G(0,x)^{1/4}]&\le (W\rq{}C)^{2} \sum_{\tau:0\Rightarrow x} \mathbb{E}\left[ \left(\left( \sum_{\pi_2:\tau_2\to \tau_2\in V\setminus \{\tau_0,\tau_1\}} \frac{\widetilde{W}_{\pi_2}}{(2 \beta)_{\pi_2}} \right)\widetilde{W}_{l_3,l_4} \right. \right. \cdots\\
& \left. \left. \widetilde{W}_{l_k,l_{k+1}} \left( \sum_{\pi_k:\tau_k\to \tau_k\in V\setminus\{\tau_0 ,\ldots,\tau_{k-1}\}}\frac{\widetilde{W}_{\pi_k}}{(2 \beta)_{\pi_k}} \right)\right)^{1/4} \right],
\end{align*}
recursively we get, at the end
\begin{align}\label{eq:decay}
\mathbb{E}[G(0,x)^{1/4}]&\le \sum_{\tau:0\Rightarrow x} (W\rq{}C)^{|\tau|}\le \sum_{k\ge |x|}\sum_{\tau:0\Rightarrow x, |\tau|=k} (W\rq{} C)^{k}\le  \sum_{k\ge |x|} (2d)^k (W\rq{}C)^{k}\le e^{-\kappa |x|}
\end{align}
where we have used the fact that there is at most \((2d)^k\) self avoiding paths on a finite sub graph of \(\mathbb{Z}^d\) of length \(k\), and have chosen \(W\rq{}\) {such that \((2d) (W\rq{}C)<1\)}.
\end{proof}
\section{Pure point spectrum}\label{sec-pp-spec}

\begin{thm}\label{loc}
Consider the graph \(\mathbb{Z}^d\) where to each vertex is assigned weight \(\theta>0\), and to each edge weight 1. In this context, denote the operator introduced in Definition~\ref{defi-nu-G-theta-eta} by \(H_\theta\).
There exists \(\theta_0\), which depends on \(d\) only, such that if \(\theta<\theta_0\) we have that the operator \(H_\theta\) is localized, i.e. has a.s. a complete set of orthonormal eigenfunctions, which decay exponentially. 
\end{thm}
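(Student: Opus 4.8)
The plan is to deduce exponential localization of $H_\theta$ from the fractional-moment decay already established in \eqref{eq:decay}, following the classical Aizenman--Molchanov scheme \cite{aizenman1993localization}. The central estimate we need is a uniform-in-volume bound of the form
\begin{equation}
\label{eq:fmb}
\sup_{\Lambda} \ \mathbb{E}\!\left[\, G_\Lambda(x,y)^{1/4} \,\right] \le C\, e^{-\kappa |x-y|}
\end{equation}
for all $x,y \in \mathbb{Z}^d$, where $G_\Lambda = (H_\theta^\Lambda)^{-1}$ is the Green function of the restriction of $H_\theta$ to a finite box $\Lambda$ (with, say, wired boundary condition $\widetilde\Lambda$ as defined above). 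The proof of Theorem~\ref{LERRW} part 1) gives exactly \eqref{eq:decay} with $x=0$; by translation invariance of $\nu^{1,\theta,0}$ on $\mathbb{Z}^d$ and the fact that the random walk expansion (Proposition~\ref{orgeb8276c}) and Lemma~\ref{lem-moment-of-greenfct} apply verbatim on any finite graph, the same argument yields \eqref{eq:fmb} uniformly in $\Lambda$, provided $\theta$ is small enough that $2d\,C\sqrt{\theta} < 1$ where $C$ is the universal constant of Lemma~\ref{lem-moment-of-greenfct}; this fixes the threshold $\theta_0$. Note $\eta\equiv 0$ throughout, so $\tfrac12 G_\Lambda(x,x)$ is genuinely Gamma$(\tfrac12,\tfrac1{\theta^2})$ and independent of the off-diagonal structure, which is what makes the recursion close.

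First I would record \eqref{eq:fmb} carefully, including the diagonal case: when $x=y$ the bound is immediate from Lemma~\ref{lem-moment-of-greenfct}, and the off-diagonal case is \eqref{eq:decay} transported by translation. Next I would invoke the standard consequence that a uniform fractional-moment bound with exponential decay implies spectral localization on the whole spectrum. Concretely: (i) fractional-moment decay implies dynamical localization, i.e. $\mathbb{E}\big[\sup_t |\langle \delta_y, e^{itH_\theta} P_I(H_\theta)\delta_x\rangle|\big] \le C e^{-\kappa|x-y|}$ for any bounded interval $I$, via the Aizenman--Molchanov argument together with the existence of the infinite-volume limit of the $G_\Lambda$ (which here follows from the explicit Laplace-transform characterization in Theorem~\ref{thmA-kolmogorov-extension} and monotonicity/tightness of the finite-volume Green functions); and (ii) dynamical localization implies pure point spectrum with exponentially decaying eigenfunctions, by the RAGE theorem / the argument of Aizenman, or alternatively by a direct Borel--Cantelli eigenfunction-correlator argument. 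I would cite \cite{aizenman1993localization} for this passage rather than reproving it, since the excerpt explicitly advertises the result "as an application of \cite{aizenman1993localization}".

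The two points that need genuine care, and where I would spend most of the writing effort, are the following. The first is the \emph{a priori} Wegner-type input: the Aizenman--Molchanov machinery usually needs boundedness of fractional moments of $G_\Lambda(x,x)$ \emph{before} one knows decay, and here this is supplied for free by Lemma~\ref{lem-moment-of-greenfct} (the diagonal Green function is a reciprocal Gamma with an explicit $1/4$-moment, uniformly in $W$ and in the graph) — so I would emphasize that the usual hardest hypothesis is automatic in this model. The second, and the real obstacle, is passing from finite volumes to $\mathbb{Z}^d$: one must argue that $H_\theta$ on $\mathbb{Z}^d$ is (essentially) self-adjoint and that $G_\Lambda(x,y) \to G(x,y)$ along $\Lambda \uparrow \mathbb{Z}^d$ in a way compatible with Fatou's lemma, so that \eqref{eq:fmb} survives the limit; since $H_\theta = 2[\beta] - \Delta$ with $\beta > 0$ a.s.\ and $\Delta$ bounded, self-adjointness is not an issue, but controlling the off-diagonal convergence (monotone convergence of the nested Green functions, using $H_\theta^\Lambda > 0$ a.s.\ from the support of $\nu$) is the step I expect to require the most attention. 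Once \eqref{eq:fmb} holds in infinite volume, the eigenfunction-correlator bound and hence exponential localization follow by the now-standard route, completing the proof of Theorem~\ref{loc}.
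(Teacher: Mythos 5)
Your proposal captures the broad strategy — combine the fractional-moment decay \eqref{eq:decay} with the Aizenman--Molchanov machinery — but it omits the step that is actually the crux of the paper's proof, namely the verification of the $\tau$-regularity hypothesis of Theorem~3.1 in \cite{aizenman1993localization}. You substitute for it the observation that the diagonal fractional moments $\E[G(x,x)^{1/4}]$ are uniformly bounded (Lemma~\ref{lem-moment-of-greenfct}); but that is a different statement. The hypothesis that AM93 requires is a regularity (``$\tau$-regularity'') property of the \emph{single-site potential distribution}, not a moment bound on the Green function. Moreover, since the $\beta_i$ under $\nu^{W,\theta,0}$ are \emph{not} independent, one cannot use the marginal of $\beta_0$ alone: what must be controlled is the \emph{conditional} law of $\beta_0$ given $(\beta_i)_{i\neq 0}$. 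Your write-up never addresses the correlation structure of $\beta$, and a reader would reasonably object that citing ``the now-standard route'' tacitly assumes an i.i.d.\ potential, which does not hold here.

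The paper closes exactly this gap. Using Proposition~\ref{coro-single-site-decomp}, it identifies the conditional law of $\beta_0$ on a finite box $\Gcal_n$ with wired boundary (where the marginal is $\nu^{1,\theta,\eta}$ with a nonzero $\eta$ carrying the boundary weight), then passes to $n\to\infty$. This limit is nontrivial and itself \emph{uses} the fractional-moment decay \eqref{eq:decay1} twice: once to show $\check\eta_0 = O(n^{d-1}) e^{-\kappa n} \to 0$, and once, via Cauchy--Schwarz together with the Gamma law of $\gamma$, to show that $D_0=\sum_{j\sim 0} G(0,j)/G(0,0)$ is a.s.\ finite. Only after these steps does one obtain the explicit conditional density $g_{D_0}$ in \eqref{eq-cond-den-beta0}, a shifted IG/Gamma-type density, which can be checked directly to be $1$-regular. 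Without this computation, the appeal to Theorem~3.1 of \cite{aizenman1993localization} is unjustified. By contrast, the infinite-volume and self-adjointness issues you flag as ``the real obstacle'' are handled in the paper essentially for free by the Kolmogorov-extension construction of Theorem~\ref{thmA-kolmogorov-extension}, so your emphasis is misplaced. To repair your proposal you would need to add: (i) the observation that the correct regularity input is the conditional single-site law, not the marginal or the diagonal Green function; (ii) the derivation of $g_{D_0}$ via Proposition~\ref{coro-single-site-decomp} and the $n\to\infty$ limit; and (iii) the a.s.\ finiteness of $D_0$, which in turn leans on the fractional-moment decay you have already established.
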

\begin{proof}
Since \(\beta= (\beta_i)_{i\in V(\mathbb{Z}^d)}\) be \(\nu^{1,\theta,0}\) distributed, by the discussion after Remark~\ref{rmk-scaling}, \(H_{\theta}\) is in the right scaling. We use a result of  \cite{aizenman1993localization}, in particular, we recall the following definition:
\begin{defi}A probability measure \(\nu\) on \(\mathbb{R}\) is said to be \(\tau\)-regular for some parameter \(\tau\in (0,1)\) if  there exists finite constants \(C, K >0\) such that 
\[ 
\nu([z- \delta, z+\delta]) \le C \delta^\tau \nu([z- K, z+K]),
\]
for all \(\delta \in (0,1)\) and \(z \in \mathbb{R}\).
\end{defi}
According to Theorem 3.1 of \cite{aizenman1993localization}   it suffices for our purpose to show that, the conditional density of single site potential is \(\tau\)-regular for some \(\tau>0\).  Let \(\Gcal_n = (V_n, E_n)\) be the  cube of volume \(2^d n^d\) centered at the origin. 
Denote by \(\partial V_n\) the boundary of this cube, i.e. the set of vertices \(y\notin V_n\) which are neighbors to at least  one element of \(V_n\). By (\ref{eq-finite-marginal-laplace-zd}), the marginal distribution \((\beta_i)_{i\in V}\) is \(\nu^{1,\theta,\eta}\) distributed, where  
\[\eta_i=\sum_{j:j\sim i, j \in \partial V_n}\theta = |\partial V_n| \theta,   \qquad \mbox{ for \(i \in V_n\).}\]
{By Remark~\ref{rmk-scaling} and the proof of Theorem~\ref{LERRW} part 1), we can choose $\theta_0$ such that if we consider VRJP$(1, \theta)$, with $\theta <\theta_0$,} then 
\begin{equation}\label{eq:decay1}
{\mathbb{E}[G(0,x)^{1/4}]\le {\rm e}^{-\kappa x},}
\end{equation}
for some $\kappa >0$.
In the sequel assume \(\theta<\theta_0\).  The conditional density of \(\beta_0\) given \((\beta_i)_{i\in V\setminus\{0\}}\) is given by \eqref{eq-cond-den-betai0}, with \(i_0 = 0\). Using \eqref{eq-checketa-i0}, we have
\begin{align*}
  \check{\eta}_{0}=\sum_{j \colon j\sim 0}\sum_{k}\hat{G}^{V}(j,k) \theta \eta_k
                    \le O(n^{d-1}) e^{-\kappa n}\xrightarrow[]{n\to \infty}0.
\end{align*}
Define \(D_0=\sum_{j \colon j\sim i_0}\frac{G(i_0,j)}{G(i_0,i_0)}\). {Notice that  \( D_0 < \infty\) a.s.}{ In fact, using Cauchy-Schwartz}
\[{\E\left[\left(\frac{G(i_0,j)}{G(i_0,i_0)}\right)^{1/8}\right] \le \E[G(i_0,j)^{1/4}]^{1/2} \E[ G(i_0,i_0)^{-1/4}]^{1/2} < \infty.}\]
{The finiteness of the expression above is a consequence of \eqref{eq:decay1} and the fact that \(\gamma\) (see equation \eqref{eq-gamma-is-2betaminus-etc}) is Gamma distributed.}
 It follows that, by Corollary~\ref{coro-single-site-decomp}, and taking \(V\uparrow \mathbb{Z}^d\), the density of \(\beta_0\) conditioned on \((\beta_i)_{i\in V(\mathbb{Z}^d)\setminus\{0\}}\) equals
\begin{equation}
\label{eq-cond-den-beta0}
g_{D_0}(u):=\frac{1}{\sqrt{2\pi}}\exp\left( -(u-D_0)\theta^2/2 \right)\frac{\theta}{\sqrt{u-D_0}}\mathds{1}_{u\ge D_0}du.
\end{equation}
To check the \(\tau\)-regularity, it suffices to check Equation (3.1) of \cite{aizenman1993localization} at the singularity \(D_0\). We can explicitly compute that,
\begin{equation}
\label{eq-regulartiry-den}
\nonumber
\int_{D_0}^{D_0+x}g_{D_0}(u)du=\operatorname{Erf}(\theta \sqrt{x/2})=\int_0^{\theta\sqrt{x/2}} \frac{1}{\sqrt{2\pi}}e^{-t^2}dt=\frac{2}{\sqrt{\pi}}\left( x-\frac{x^3}{3}+O(x^5) \right).
\end{equation}
Therefore, the conditional single site density (\ref{eq-cond-den-beta0}) is \(1\)-regular (c.f. p256 of \cite{aizenman1993localization}). Thus, by Theorem 3.1 of \cite{aizenman1993localization}, there exists a \(\theta_0\), such that for \(\theta<\theta_0\), the operator \(H_{\theta}\) on \(\mathbb{Z}^d\) is localized.
\end{proof}

\section{Appendix A: Proof of Theorem~\ref{thm-induction-multiIG-IG}}
\begin{proof}
 Recall that \(V=\{1 ,\ldots,N\}\).  We partition \(V\) into \(V=V_1\sqcup V_2\).  After a relabelling,  we can pick \(V_1=\{1 ,\ldots,N'\}\) and \(V_2=\{N'+1 ,\ldots,N\}\).  The matrix \(H\) can be written in block form
\begin{equation}\label{decomposition}
H=\begin{pmatrix}H_{V_1,V_1}& -W_{V_1,V_2}\\ -W_{V_2,V_1} & H_{V_2,V_2}\end{pmatrix}\end{equation}
where the top left block \(H_{V_1,V_1}\) is an \( N'\times N'\) matrix  whose entries are  \((H_{i,j})_{i,j \in V_1}\). The other blocks are defined implicitly by the identity \eqref{decomposition}. The Schur decomposition of the right-hand side of \eqref{decomposition} implies
\[H=\begin{pmatrix}I_{V_1} & 0 \\ -W_{V_2,V_1}\hat{G}^{V_1} & I_{V_2}\end{pmatrix} \begin{pmatrix}H_{V_1,V_1}& 0 \\ 0 & \check{H}^{V_2}\end{pmatrix} \begin{pmatrix}I_{V_1} & -\hat{G}^{V_1}W_{V_1,V_2}\\ 0 & I_{V_2}\end{pmatrix}\]
where \(I_{V_1}\)  (resp.  \(I_{V_2}\) ) is the identity matrix of dimension \(N'\) (resp. \(N- N'\)). Moreover,
\[\hat{G}^{V_1}=(H_{V_1,V_1})^{-1},\ \check{H}^{V_2}=H_{V_2,V_2}-W_{V_2,V_1}\hat{G}^{V_1}W_{V_1,V_2},\ \check{G}^{V_2}=(\check{H}^{V_2})^{-1}.\]
The inverse of \(H\) can be computed via this decomposition, that is
\[G=H^{-1}=\begin{pmatrix}I_{V_1} & \hat{G}^{V_1}W_{V_1,V_2}\\ 0 & I_{V_2}\end{pmatrix}\begin{pmatrix} \hat{G}^{V_1}& 0 \\ 0 & \check{G}^{V_2}\end{pmatrix} \begin{pmatrix}I_{V_1} & 0 \\ W_{V_2,V_1}\hat{G}^{V_1}& I_{V_2}\end{pmatrix}\]
Note we can also write our vector \(\theta,\eta\) in block form, that is, e.g. \(\theta=(\theta_{V_1}, \theta_{V_2})\). Recall that  \(\check{\eta}_{V_2}\) was defined  via  the equation 
\begin{equation}
\label{eq-etacheck}
\begin{pmatrix}I_{V_1}& 0 \\ W_{V_2, V_1}\hat{G}^{V_1}& I_{V_2}\end{pmatrix}\begin{pmatrix}\eta_{V_1}\\ \eta_{V_2}\end{pmatrix}=\begin{pmatrix}\eta_{V_1}\\ \check{\eta}_{V_2}.\end{pmatrix}
\end{equation}
and
\begin{equation}
\label{eq-etahat}
\hat{\eta}_{V_1}=\eta_{V_1}+W_{V_1,V \setminus{V_1}}\theta_{V\setminus V_1}.
\end{equation}

With all these definitions and decompositions, the quadratic form in the exponent (which is the Hamiltonian or energy in the point of view of statistical physics) can be written as
\begin{align*}
\left< \theta,H \theta \right>+\left< \eta,G\eta \right>-2\left< \theta,\eta \right>&=\left< \theta_{V_2},\check{H}^{V_2} \theta_{V_2} \right>+\left< \check{\eta}_{V_2},\check{G}^{V_2}\check{\eta}_{V_2} \right>-2\left< \check{\eta}_{V_2},\theta_{V_2} \right>\\
&+\left< \theta_{V_1},H_{V_1,V_1}\theta_{V_1} \right>+\left< \hat{\eta}_{V_1},\hat{G}^{V_1}\hat{\eta}_{V_1} \right>-2\left< \hat{\eta}_{V_1},\theta_{V_1} \right>.
\end{align*}
Note also that for the determinant, we have, again by Schur, \(\det H=\det H_{V_1,V_1} \det \check{H}^{V_2}\), therefore, the integrand in Equation \eqref{eq-multi-IG} factorizes
\begin{equation}\label{factor}
\begin{aligned}
&\left( \frac{2}{\pi} \right)^{N/2} e^{-\frac{1}{2}\left( \left< \theta,H \theta \right>+\left< \eta,G\eta \right>-2\left< \theta,\eta \right> \right)} \frac{\mathds{1}_{H>0}}{\sqrt{\det H}} \prod_i \theta_i d\beta\\
=&\left( \frac{2}{\pi} \right)^{|V_1|/2} e^{-\frac{1}{2} \left(\left< \theta_{V_1},H_{V_1,V_1}\theta_{V_1} \right>+\left< \hat{\eta}_{V_1},\hat{G}^{V_1}\hat{\eta}_{V_1} \right>-2\left< \hat{\eta}_{V_1},\theta_{V_1} \right>   \right)} \frac{\mathds{1}_{H_{V_1,V_1}>0}}{\sqrt{\det H_{V_1,V_1}}}\prod_{i\in V_1}\theta_i d\beta_{V_1}\\
\cdot& \left( \frac{2}{\pi} \right)^{|V_2|/2} e^{-\frac{1}{2}\left(\left< \theta_{V_2},\check{H}^{V_2} \theta_{V_2} \right>+\left< \check{\eta}_{V_2},\check{G}^{V_2}\check{\eta}_{V_2} \right>-2\left< \check{\eta}_{V_2},\theta_{V_2} \right> \right)}\frac{\mathds{1}_{\check{H}^{V_2}>0}}{\sqrt{\det \check{H}^{V_2}}}\prod_{i\in V_2}\theta_i d\beta_{V_2}
\end{aligned}
\end{equation}
{Next we choose \(V_1 =\{1\}\). Using the decomposition in \eqref{factor}, we have that the first expression in the factorization is of the form
\[\frac{a}{\sqrt{\pi/2}} e^{-\frac{1}{2}(a^2y-b^2/y-2ab)}\frac{1}{\sqrt{y}},\]
for some \(a, b>0\).  In virtue of \eqref{eq-i-IG-dist}, this is a probability density.
By iterating the previous step to each of the one-dimensional marginals, we prove Equation \eqref{eq-multi-IG}. Moreover, the factorization in \eqref{factor} implies that}
\begin{equation}
\label{eq-marginal-of-beta}
\left( \frac{2}{\pi} \right)^{|V_1|/2} e^{-\frac{1}{2} \left(\left< \theta_{V_1},H_{V_1,V_1}\theta_{V_1} \right>+\left< \hat{\eta}_{V_1},\hat{G}^{V_1}\hat{\eta}_{V_1} \right>- 2\left< \hat{\eta}_{V_1},\theta_{V_1} \right> \right)} \frac{\mathds{1}_{H_{V_1,V_1}>0}}{\sqrt{\det H_{V_1,V_1}}}\prod_{i\in V_1}\theta_i d\beta_{V_1}
\end{equation}
is the marginal distribution of \((\beta_i)_{i\in V_1}\), and the second term,
\begin{equation}
\label{eq-conditional-density-of-beta}
\left( \frac{2}{\pi} \right)^{|V_2|/2} e^{-\frac{1}{2}\left(\left< \theta_{V_2},\check{H}^{V_2} \theta_{V_2} \right>+\left< \check{\eta}_{V_2},\check{G}^{V_2}\check{\eta}_{V_2} \right>-2\left< \check{\eta}_{V_2},\theta_{V_2} \right>  \right)}\frac{\mathds{1}_{\check{H}^{V_2}>0}}{\sqrt{\det \check{H}^{V_2}}}\prod_{i\in V_2}\theta_i d\beta_{V_2}
\end{equation}
is the conditional density of \((\beta_i)_{i\in V_2}\) given the values of \((\beta_j)_{j\in V_1}\).
\end{proof}
\section{Appendix B: Proof of Ward identities}
\label{sec-app1}
Let us prove the first Ward identity. Note the \(\nu^{W,\theta,\eta}\) is a probability density for any parameters, in particular true for \(\nu^{W,\sqrt{k+\theta^2},\eta},\), now (\ref{eq-laplace-tr-nu-theta-eta}) is equivalent to the fact that \(\nu^{W,\sqrt{k+\theta^2},\eta}\) is a probability, since
\[\sum_{i\in V}k_i \beta_i+\frac{1}{2}\left< \theta,H \theta \right>=\frac{1}{2}\left< \sqrt{k+\theta^2},H\sqrt{k+\theta^2} \right>+\sum_{i,j}W_{i,j}\left( \sqrt{(k_i+\theta_i^2)(k_j+\theta_j^2)}-\theta_i \theta_j \right).\]

For the second Ward identity, first note that
\begin{equation}
\label{equation-Xi-bis}
\Xi(k,W,\beta)=\frac{G(i_0,l)}{G(i_0,i_0)}\exp\left( -\frac{k_{i_0}}{2}\left< W_{i_0,\cdot},\hat{G} W_{i_0,\cdot} \right>-\frac{1}{2}\left< \sqrt{k},2 \beta \sqrt{k} \right>_{i\ne i_0} \right).
\end{equation}
The factorization with \(V_2=\{i_0\}\) and \(V_1=V\setminus\{i_0\}\) gives
\begin{equation}
\label{equation-factori-i_0-nu-dbeta}
\begin{aligned}
&\nu^{W,\theta,0}(d \beta)=\mathds{1}_{\gamma>0} e^{- \theta_{i_0}^2 \gamma} \frac{\theta_{i_0}}{\sqrt{\pi \gamma}} d \gamma \cdot \\
&\mathds{1}_{\hat{H}>0}\sqrt{\frac{2}{\pi}}^{|V|-1} e^{ -\frac{1}{2}\left< \theta,2 \beta\theta \right>_{i\ne i_0}+\frac{1}{2}\left< \theta, \Delta_W \theta \right>_{i\ne i_0}-\frac{1}{2} \theta_{i_0}^2 \left< W_{i_0,\cdot},\hat{G}W_{i_0,\cdot} \right>+ \theta_{i_0}\left< \theta,W_{i_0,\cdot} \right>_{i\ne i_0} }\frac{1}{\sqrt{\det \hat{H}}}\prod_{i\ne i_0} \theta_i d \beta_i
\end{aligned}
\end{equation}
Therefore, we see that \(\Xi(k,W,\beta)\) plug in well in \(\nu^{W,\theta,0}\). More precisely, we can write (note that \(\frac{G(i_0,l)}{G(i_0,i_0)}=\sum_{k\sim i_0}W_{i_0,k}\hat{G}(k,l)\) which is independent of \(\gamma\))
\begin{equation}
\label{equation-E-Xi}
\begin{aligned}
&\left< \Xi(k,W,\beta) \right>_{W,\theta,0}=\int_{\gamma>0} e^{-\theta_{i_0}^2 \gamma}\frac{\theta_{i_0}}{\sqrt{\pi \gamma}} d \gamma \cdot \int_{\hat{H}>0} \frac{G(i_0,l)}{G(i_0,i_0)} \sqrt{\frac{2}{\pi}}^{|V|-1}\\
& e^{-\frac{1}{2}\left< \sqrt{\theta^2+k}, 2 \beta \sqrt{\theta^2+k} \right>_{i\ne i_0}-\frac{1}{2}(\theta_{i_0}^2+k_{i_0})\left< W_{i_0,\cdot},\hat{G}W_{i_0,\cdot} \right>} \frac{1}{\sqrt{\det \hat{H}}}\prod_{i\ne i_0} \theta_i d \beta_i.
\end{aligned}
\end{equation}
As \(\gamma\) is independent of the rest in \eqref{equation-E-Xi}, using the fact that \(\nu^{W,\sqrt{\theta^2+k},0}\) is a probability measure and
\begin{equation}
\label{equation-ward-3}
\left< \frac{G(i_0,l)}{G(i_0,i_0)} \right>_{W,\sqrt{\theta^2+k},0}=\sqrt{\frac{\theta_l^2+k_l}{\theta_{i_0}^2+k_{i_0}}},
\end{equation}
We deduce the second Ward identity.

Equation~\eqref{equation-ward-3} is well known, it first appear in Equation (B.3) of \cite{DSZ06} (where \(\theta\) and \(k\) are constant), as a consequence of supersymmetric localization. However, to be self-contained, we provide a non supersymmetric proof of it. The idea is to again to use symmetry, note that the second line of Equation~\eqref{equation-factori-i_0-nu-dbeta} can be written as (where the sum \(\sum_{i\sim j}\) is over all non oriented edges with end points in \(V\))
\begin{equation}
\label{equation-marginal-as-GoverG}
\int_{\hat{H}>0} \sqrt{\frac{2}{\pi}}^{|V|-1} e^{-\frac{1}{2}\sum_{i\sim j}W_{i,j}\left( \frac{G(i_0,j)}{G(i_0,i)} \theta_i^2+\frac{G(i_0,i)}{G(i_0,j)}\theta_j^2-2 \theta_i \theta_j \right)} \frac{1}{\sqrt{\det \hat{H}}} \prod_{i\ne i_0} \theta_i d \beta_i=1.
\end{equation}
The above integral, written as integral over the variables \(\psi_i=\frac{G(i_0,i)}{G(i_0,i_0)}\left( \prod_{j\ne i_0}\frac{G(i_0,j)}{G(i_0,i_0)} \right)^{-1/|V|}\), is as follows:
\[\int_{\prod_{i\in V} \psi_i=1} \left(\prod_{i\ne i_0} \theta_i\right) \psi_{i_0} \Theta(\psi_i,\ i\in V) \prod_{i\ne i_0} \frac{d \psi_i}{\psi_i}=1\]
where \(\Theta(\psi_i,\ i\in V) \prod_{i\ne i_0} \frac{d \psi_i}{\psi_i}\) is an expression invariant under the action of \(\mathfrak{S}_{|V|}\) (the symmetric group of permutations) over the simplex \(\prod_{i\in V}\psi_i=1\). The integral equals 1 for any \(W,\ \theta\); thus when integrating \(\frac{G(i_0,l)}{G(i_0,i_0)}=\frac{\psi_l}{\psi_{i_0}}\), it suffice to tinker the symmetric breaking part \(\left( \prod_{i\ne i_0}\theta_i \right) \psi_{i_0}\) by writing
\[\left( \prod_{i\ne i_0}\theta_i \right) \psi_{i_0} \cdot \frac{\psi_l}{\psi_{i_0}}= \frac{\theta_l}{\theta_{i_0}}\cdot \left( \prod_{i\ne l}\theta_i \right)\psi_l,\]
and the dangling constant \(\frac{\theta_l}{\theta_{i_0}}\) is the value of the integral.

\vspace{0.5cm}

{\bf Acknowledgement.} X. Z. want to thank R. Peled for his encouragement of writing this paper. {A.C. was supported by ARC grant  DP180100613 and Australian Research Council Centre of Excellence for Mathematical and Statistical Frontiers (ACEMS) CE140100049. X.Z. was supported
  by ERC  starting grant 678520 and supported in part by Israel Science Foundation grant 861/15.}

\end{document}